\theoremstyle{plain}
\newtheorem{theorem}{Theorem}
\newtheorem{proposition}[theorem]{Proposition}
\newtheorem{lemma}[theorem]{Lemma}
\newtheorem{corollary}[theorem]{Corollary}
\theoremstyle{definition}
\newtheorem{result}[theorem]{Result}
\newtheorem*{remark}{Remark}
\newtheorem{conjecture}[theorem]{Conjecture}
\newtheorem*{question}{Question}
\newcommand{\Case}[1]{\mbox{\texttt{Case} #1.}}
\setlist[enumerate]{label=(\roman*),font=\rm,leftmargin=1.2cm,itemsep=1pt,
parsep=1pt,before={\parskip=1pt}}
\newcommand{\listeq}[1]{\hspace*{-1.2cm}\hfill$\displaystyle #1$\hfill\hspace{1sp}}
\numberwithin{theorem}{section}
\numberwithin{equation}{section}
\numberwithin{table}{section}
\numberwithin{figure}{section}
\newcommand{\ZZ}{\mathbb{Z}}
\newcommand{\QQ}{\mathbb{Q}}
\newcommand{\CC}{\mathbb{C}}
\newcommand{\FF}{\mathbb{F}}
\DeclareMathOperator{\denom}{denom}
\DeclareMathOperator{\ord}{ord}
\DeclareMathOperator{\lcm}{lcm}
\DeclareMathOperator{\re}{Re}
\newcommand{\BF}{\mathrm{B}}
\newcommand{\MS}{\mathcal{S}}
\newcommand{\MSR}{\mathcal{S}^\star}
\newcommand{\MP}{\mathcal{P}}
\newcommand{\LS}{\mathcal{L}}
\newcommand{\NI}{\mathcal{N}}
\newcommand{\dmid}{\parallel}
\newcommand{\mids}{\, \mid \,}
\newcommand{\impliesq}{\; \implies \;}
\newcommand{\andq}{\quad \text{and} \quad}
\newcommand{\set}[1]{\mleft\{#1\mright\}}
\begin{document}

\title[On the nonintegrality of certain generalized binomial sums]
{On the nonintegrality of certain\\ generalized binomial sums}
\author{Bernd C. Kellner}
\address{G\"ottingen, Germany}
\email{bk@bernoulli.org}
\subjclass[2020]{05A10 (Primary), 11A07 (Secondary)}
\keywords{Binomial sum, nonintegrality, denominator}

\begin{abstract}
We consider certain generalized binomial sums $\mathcal{S}_{(r,n)}(\ell)$ and
discuss the nonintegrality of their values for integral parameters $n,r \geq 1$
and $\ell \in \mathbb{Z}$ in several cases using $p$-adic methods.
In particular, we show some properties of the denominator of $\mathcal{S}_{(r,n)}(\ell)$.
Viewed as polynomials, the sequence $(\mathcal{S}_{(r,n)}(x))_{n \geq 0}$ forms
an Appell sequence. The special case $\mathcal{S}_{(r,n)}(2)$ reduces to the sum
$\sum_{k=0}^{n} \binom{n}{k} \frac{r}{r+k}$,
which has recently received some attention from several authors regarding the
conjectured nonintegrality of its values. So far, only a few cases have been proved.
The generalized results imply, among other things, for even $|\ell| \geq 2$ that
$\mathcal{S}_{(r,n)}(\ell) \notin \mathbb{Z}$ when $\binom{r+n}{r}$ is even,
e.g., $r$ and $n$ are odd. Although there exist exceptions where
$\mathcal{S}_{(r,n)}(\ell) \in \mathbb{Z}$, ``almost all'' values of
$\mathcal{S}_{(r,n)}(\ell)$ for $n,r \geq 1$ are nonintegral for any fixed
$|\ell| \geq 2$. Subsequently, we also derive explicit inequalities between the
parameters for which $\mathcal{S}_{(r,n)}(\ell) \notin \mathbb{Z}$.
Especially, this is shown for certain small values of $\ell$ for $r \geq n$ and
$n > r \geq \frac{1}{5} n$. As a supplement, we finally discuss exceptional cases
where $\mathcal{S}_{(r,n)}(\ell) \in \mathbb{Z}$.
\end{abstract}

\maketitle

\setlength{\epigraphwidth}{13.5em}
\epigraph{\footnotesize \it If you cannot solve a problem,
then try to solve a more general problem.}
{\footnotesize P\'olya~\cite{Polya:2004}}
\vspace{4ex}

%%%%%%%%%%%%%%%%%%%%%%%%%%%%%%%%%%%%%%%%%%%%%%%%%%%%%%%%%%%%%%%%%%%%%%%%%%%%%%%%
% Section
%%%%%%%%%%%%%%%%%%%%%%%%%%%%%%%%%%%%%%%%%%%%%%%%%%%%%%%%%%%%%%%%%%%%%%%%%%%%%%%%

\section{Introduction}

Define the monic polynomial
\begin{equation} \label{eq:S-def}
  \MS_{(r,n)}(x) = \sum_{k=0}^{n} \binom{n}{k} (-1)^k \, x^{n-k}
    \binom{r+k}{r}^{\!\!-1} \in \QQ[x]
\end{equation}
of degree $n$ for integers $n, r \geq 0$. Trivial cases are given by
\begin{equation} \label{eq:S-0}
  \MS_{(0,n)}(x) = (x-1)^n \andq \MS_{(r,0)}(x) = 1.
\end{equation}
Therefore, we assume that $n, r \geq 1$ for the rest of the paper.

As we shall see later, the polynomial \eqref{eq:S-def} can be expressed in
several different ways that lead to various properties.
As a surprising relation, we have
\begin{equation} \label{eq:S-2}
  \MS_{(r,n)}(2) = \sum_{k=0}^{n} \binom{n}{k} \frac{r}{r+k}.
\end{equation}
The above sum has received some attention in recent times, where it is
conjectured that \eqref{eq:S-2} only takes nonintegral values. This has been
shown for $1 \leq r \leq 22$ and for $1 \leq n < r$ in that case.
See \cite{Chirita:2014,LLPT:2020,Lopez-Aguayo:2015,Lopez-Aguayo&Luca:2016,
Luca&Pomerance:2021,Thongjunthug:2019}
for the history and results.

Interestingly, the following sum, related to \eqref{eq:S-2} with alternating signs,
\[
  \MS_{(r,n)}(0) = \sum_{k=0}^{n} \binom{n}{k} (-1)^{n-k} \frac{r}{r+k}
    \notin \ZZ,
\]
can be evaluated instantly, since it can be interpreted as
a finite difference as well as a partial fraction decomposition
(see Corollary~\ref{cor:basic} and Section~\ref{sec:proofs-1}).

However, a generalized conjecture of \eqref{eq:S-2} cannot be established
without further study, since there are several exceptions where in fact
$\MS_{(r,n)}(\ell) \in \ZZ$ for certain $\ell \in \ZZ$ as listed in the
two tables below. See Section~\ref{sec:except} for more results.

\begin{table}[H]
\setstretch{1.25}
\begin{center}
\begin{tabular}{*{6}{c}}
  \toprule
  \multicolumn{6}{c}{Parameters $(r,n,\ell)$} \\
  \midrule
  $(2,4,-49)$ & $(2,4,-34)$ & $(2,4,-19)$ & $(2, 4, -4)$ & $(2, 4, 11)$ & $(2, 4, 26)$ \\
  $(2, 4, 41)$ & $(2, 8, -17)$ & $(2, 8, 28)$ & $(2, 12, -38)$ & $(2, 16, -16)$ & $(2, 16, 35)$ \\
  $(2, 20, -5)$ & $(2, 40, -40)$ & $(3, 7, -17)$ & $(3, 7, 43)$ & $(4, 6, 43)$ & $(4, 24, 46)$ \\
  \bottomrule
\end{tabular}

\caption{Exceptions where $\MS_{(r,n)}(\ell) \in \ZZ$ in the range
$1 \leq |\ell|,n,r \leq 50$.}
\label{tbl:except-1}
\end{center}
\end{table}
\vspace*{-8pt}

One observes that the exceptions in Table~\ref{tbl:except-1} have the property
that $r < n$ and $\ell$ is relatively small. In contrast, Table~\ref{tbl:except-2}
shows exceptions of the opposite case $n \leq r$,
which reveals that the least positive $\ell$ can be arbitrarily large.

\begin{table}[H]
\setstretch{1.25}
\begin{center}
\begin{tabular}{*{5}{c}}
  \toprule
  \multicolumn{5}{c}{Parameters $(r,n,\ell)$} \\
  \midrule
  $(9,6,1002)$ & $(10,5,2003)$ & $(12,8,50\,389)$ & $(12,9,41\,991)$ & $(16, 12, 4\,345\,966)$ \\
  \bottomrule
\end{tabular}

\caption{\parbox[t]{23em}{Exceptions where $\MS_{(r,n)}(\ell) \in \ZZ$ with
$1 \leq n \leq r \leq 16$ and least positive $\ell$.}}
\label{tbl:except-2}
\end{center}
\end{table}
\vspace*{-8pt}

The purpose of the paper is to discuss the phenomenon of the nonintegrality of
the sum $\MS_{(r,n)}(\ell)$ in spite of exceptions and to derive explicit
conditions for its parameters.
Indeed, the motivation for the generalized results was induced by the above
quotation of P\'olya, since the sum~\eqref{eq:S-2} sheds no light on its
behavior when viewed individually.

The paper is organized as follows. The next section presents some basic
properties of the polynomial $\MS_{(r,n)}(x)$ and its values, while
Section~\ref{sec:main} contains the main results.
Subsequently, Section~\ref{sec:prelim} is devoted to preliminaries and some known
results in number theory. Sections~\ref{sec:proofs-1} and~\ref{sec:proofs-2}
contain the proofs of the theorems. The last section discusses the case of exceptions.

%%%%%%%%%%%%%%%%%%%%%%%%%%%%%%%%%%%%%%%%%%%%%%%%%%%%%%%%%%%%%%%%%%%%%%%%%%%%%%%%
% Section
%%%%%%%%%%%%%%%%%%%%%%%%%%%%%%%%%%%%%%%%%%%%%%%%%%%%%%%%%%%%%%%%%%%%%%%%%%%%%%%%

\section{Basic properties}

Let $(n)_k$ denote the falling factorial such that $\binom{n}{k} = (n)_k / k!$.
Let $\denom(\cdot)$ be the denominator of a rational polynomial or number.
For properties of Appell polynomials, see \cite{Appell:1880,Roman:1984,Rota:1975}.
The following theorem shows some basic properties of $\MS_{(r,n)}(x)$.

\begin{theorem} \label{thm:basic}
Let $n, r \geq 1$. There are the following identities:
\begin{enumerate}
\item \listeq{ \hspace*{-0.2em}
  \MS_{(r,n)}(x) = \sum_{k=0}^{n} \frac{(n)_k}{(r+k)_k} \, (-1)^k \, x^{n-k};}

\item \listeq{
  \MS_{(r,n)}(x) = r\! \int_{0}^{1} (x-t)^n (1-t)^{r-1} dt.}
\end{enumerate}
The polynomial $\MS_{(r,n)}(x)$ is an Appell polynomial satisfying the
equivalent relations
\begin{enumerate}[resume]
\item \listeq{ \hspace*{-2.6em}
  \MS_{(r,n)}(x)' = n \, \MS_{(r,n-1)}(x);}

\item \listeq{
  \MS_{(r,n)}(x + y) = \sum_{k=0}^{n} \binom{n}{k} \MS_{(r,k)}(x) \, y^{n-k}.}
\end{enumerate}
The denominator of $\MS_{(r,n)}(x)$ and its values for $\ell \in \ZZ$ have
the properties
\begin{enumerate}[resume]
\item \listeq{
  \denom\left( \MS_{(r,n)}(x) \right) = \binom{r+n}{r};}

\item \listeq{
  \denom\left( \MS_{(r,n)}(\ell) \right) \,\,\mid\,\, \binom{r+n}{r}.}
\end{enumerate}
\end{theorem}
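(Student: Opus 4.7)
Both should fall out directly from the definition \eqref{eq:S-def}. For (i), I would substitute $\binom{n}{k} = (n)_k/k!$ and $\binom{r+k}{r} = (r+k)_k/k!$ into \eqref{eq:S-def} and cancel the $k!$'s in numerator and denominator. For (ii), I would expand $(x-t)^n$ inside the integral by the binomial theorem and evaluate each resulting integral via the Beta integral
\[
  \int_0^1 t^k (1-t)^{r-1}\,dt \;=\; \frac{1}{r\binom{r+k}{k}};
\]
the prefactor $r$ cancels and \eqref{eq:S-def} is recovered term by term.

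\textbf{Parts (iii) and (iv).} I would prove (iii) first by differentiating the integral in (ii) under the integral sign: this produces a factor $n$ and reduces the exponent of $(x-t)$ to $n-1$, yielding $n\,\MS_{(r,n-1)}(x)$. The translation identity (iv) then follows from (iii) by Taylor expansion at $x$, which is the standard equivalence for Appell sequences; alternatively, (iv) can be read off (ii) directly by expanding $(x+y-t)^n = \sum_k \binom{n}{k}(x-t)^k y^{n-k}$ inside the integral and recognizing each piece as $\MS_{(r,k)}(x)$.

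\textbf{Parts (v) and (vi).} Here the content-bearing step is the symmetric identity
\[
  \frac{\binom{n}{k}}{\binom{r+k}{r}} \;=\; \frac{\binom{r+n}{r+k}}{\binom{r+n}{r}},
\]
which I would verify by writing both sides as $\dfrac{n!\,r!}{(n-k)!\,(r+k)!}$. Multiplying \eqref{eq:S-def} through by $\binom{r+n}{r}$ and applying this identity gives
\[
  \binom{r+n}{r}\,\MS_{(r,n)}(x) \;=\; \sum_{k=0}^{n} (-1)^k \binom{r+n}{r+k}\, x^{n-k} \;\in\; \ZZ[x],
\]
so $\denom(\MS_{(r,n)}(x)) \mid \binom{r+n}{r}$. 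Equality in (v) comes for free from the $k=n$ term: the constant coefficient equals $(-1)^n/\binom{r+n}{r}$, already in lowest terms, which forces the denominator to be exactly $\binom{r+n}{r}$. Claim (vi) is then immediate, since plugging any $\ell \in \ZZ$ into the integer-coefficient polynomial $\binom{r+n}{r}\,\MS_{(r,n)}(x)$ produces an integer. The only genuine obstacle is spotting the symmetric identity above; after that, the entire theorem reduces to routine bookkeeping.
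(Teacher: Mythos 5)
Your proof is correct. Parts (i)--(iv) follow exactly the paper's route: cancellation of factorials for (i), termwise Beta integrals for (ii), differentiation under the integral sign for (iii), and the standard Appell/Taylor equivalence for (iv). For parts (v) and (vi) you take a genuinely different (and arguably cleaner) path. The paper works coefficient by coefficient: it invokes a lemma stating that $\denom(f)$ is the lcm of the coefficient denominators, together with the factorization $\frac{(r+n)_n}{n!} = \binom{r+n}{n-k}\,\frac{(r+k)_k}{(n)_k}$, which shows $p$-adically that every coefficient's denominator divides that of the constant term $(-1)^n\binom{r+n}{r}^{-1}$. You instead clear denominators globally via the symmetric identity
\[
  \frac{\binom{n}{k}}{\binom{r+k}{r}} = \frac{\binom{r+n}{r+k}}{\binom{r+n}{r}},
\]
obtaining $\binom{r+n}{r}\,\MS_{(r,n)}(x) \in \ZZ[x]$ in one stroke, and then read off equality from the constant term having numerator $\pm 1$. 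This is essentially the representation the paper only introduces later, in Theorem~\ref{thm:row}~(i) (after reversing the summation index); your argument shows it already suffices for the denominator claims and lets you bypass Lemma~\ref{lem:denom-2} entirely. What the paper's coefficientwise lemma buys in exchange is the strict $p$-adic inequalities needed later for the prime-power case of Theorem~\ref{thm:main}~(iii), so it is not redundant in the larger scheme, but for Theorem~\ref{thm:basic} itself your shortcut is complete and sound.
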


Evaluating the integral formula and using the Appell properties of
$\MS_{(r,n)}(x)$ easily imply the following results.

\begin{corollary} \label{cor:basic}
Let $n, r \geq 1$. We have
\begin{enumerate}
\item \listeq{
  \MS_{(r,n)}(x+1) = \sum_{k=0}^{n} \binom{n}{k} x^{n-k} \frac{r}{r+k}.}
\end{enumerate}
Special values are given as follows:
\begin{enumerate}[resume]
\item \listeq{
  \MS_{(r,n)}(0) = \sum_{k=0}^{n} \binom{n}{k} (-1)^{n-k} \frac{r}{r+k}
    = (-1)^n \, \binom{r+n}{r}^{\!\!-1} \notin \ZZ;}

\item \listeq{ \hspace*{-1.95em}
  \MS_{(r,n)}(1) = \frac{r}{r+n} \notin \ZZ;}

\item \listeq{ \hspace*{-0.25em}
  \MS_{(r,n)}(2) = \sum_{k=0}^{n} \binom{n}{k} \frac{r}{r+k}.}
\end{enumerate}
Moreover, the values of $\MS_{(r,n)}(x)$ have the properties
\begin{enumerate}[resume]
\item \listeq{
  \MS_{(r,n)}(x) > 0 \quad (x \geq 1) \andq
  (-1)^n \, \MS_{(r,n)}(x) > 0 \quad (x \leq 0).}
\end{enumerate}
\end{corollary}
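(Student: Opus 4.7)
The plan is to peel off the five claims in sequence, using only the integral representation from Theorem~\ref{thm:basic}(ii) and the Appell addition formula from Theorem~\ref{thm:basic}(iv), together with the obvious observations about when a positive rational lies in the open interval $(0,1)$.

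I would start with (iii), the easiest: plugging $x = 1$ into the integral formula collapses the integrand to a beta integral,
\[
  \MS_{(r,n)}(1) = r \int_0^1 (1-t)^{n+r-1}\, dt = \frac{r}{r+n},
\]
which is strictly between $0$ and $1$, hence non-integral. Setting $x = 0$ in the same integral yields item (ii):
\[
  \MS_{(r,n)}(0) = (-1)^n r \int_0^1 t^n (1-t)^{r-1}\, dt = (-1)^n\, r \cdot \BF(n+1,r) = (-1)^n \binom{r+n}{r}^{\!\!-1},
\]
and the bound $\binom{r+n}{r} \geq n+1 \geq 2$ rules out integrality.

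For (i), I would apply the Appell addition formula of Theorem~\ref{thm:basic}(iv) with the first argument equal to $1$ and the shift equal to $x$, yielding
\[
  \MS_{(r,n)}(x+1) = \sum_{k=0}^n \binom{n}{k}\, \MS_{(r,k)}(1)\, x^{n-k},
\]
and substituting $\MS_{(r,k)}(1) = r/(r+k)$ from (iii) gives the desired identity. Item (iv) then drops out of (i) by specializing $x = 1$. Finally, (v) is a sign check on the integral: for $x \geq 1$ both $(x-t)^n$ and $(1-t)^{r-1}$ are non-negative on $[0,1]$ and strictly positive on a subinterval of positive measure, so $\MS_{(r,n)}(x) > 0$; for $x \leq 0$ one rewrites $(x-t)^n = (-1)^n (t-x)^n$ with $t - x \geq 0$ on $[0,1]$ and runs the same argument to conclude $(-1)^n \MS_{(r,n)}(x) > 0$.

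There is no serious obstacle, as the author himself signals with the word \emph{easily}. The only point requiring a moment's care is that the inequalities in (v) are \emph{strict}, which I would justify by noting that in the boundary cases $x = 1$ and $x = 0$ the integrand reduces to a nontrivial non-negative function on $[0,1)$ and therefore has positive integral.
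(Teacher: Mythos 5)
Your proposal is correct and follows essentially the same route as the paper: the integral representation yields $\MS_{(r,n)}(1)=\frac{r}{r+n}$, the Appell addition formula of Theorem~\ref{thm:basic}(iv) gives (i), and the remaining items follow by specialization and a sign check on the integrand. The only cosmetic differences are that the paper reads $\MS_{(r,n)}(0)=(-1)^n\binom{r+n}{r}^{-1}$ directly off the defining sum \eqref{eq:S-def} rather than evaluating the beta integral, and it obtains the alternating-sum expression in (ii) by putting $x=-1$ into (i) --- a one-line specialization you should state explicitly, since your treatment of (ii) establishes only the closed form.
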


It turns out that $\MS_{(r,n)}(-1)$ is related to partial sums of binomial
coefficients in a row of Pascal's triangle. So far, no closed forms are known
for such sums according to \cite[Sec.~5.1, pp.~165--167]{GKP:1994}.

\begin{theorem} \label{thm:row}
Let $n, r \geq 1$. There are the following identities:
\begin{enumerate}
\item \listeq{
  \MS_{(r,n)}(x) = \binom{r+n}{r}^{\!\!-1}
    \sum_{k=0}^{n} \binom{r+n}{k} (-1)^{n-k} \, x^k;}

\item \listeq{ \hspace*{-2.9em}
  \MS_{(r,n)}(-1) = (-1)^n \binom{r+n}{r}^{\!\!-1} \sum_{k=0}^{n} \binom{r+n}{k}.}

\item We have the reciprocity relation
\[
  (-1)^n \MS_{(r,n)}(-1) + (-1)^r \MS_{(n,r)}(-1)
    = 2^{r+n} \binom{r+n}{r}^{\!\!-1} + 1.
\]

\item At least one of the values of $\set{\MS_{(r,n)}(-1), \MS_{(n,r)}(-1)}$
is not in $\ZZ$. In particular,
\[
  \MS_{(n,n)}(-1),\, \MS_{(n+1,n)}(-1),\, \MS_{(n,n+1)}(-1) \notin \ZZ.
\]
\end{enumerate}
\end{theorem}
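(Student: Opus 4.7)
The plan is to derive (i) by a symbolic rearrangement of \eqref{eq:S-def}, obtain (ii) by specialization, deduce (iii) from Pascal symmetry, and finally prove (iv) by combining (iii) with $p$-adic analysis for the three highlighted cases. For (i), I would use the elementary identity $\binom{n}{k}\binom{r+k}{r}^{-1}=\binom{r+n}{r}^{-1}\binom{r+n}{n-k}$, since both sides equal $\tfrac{r!\,n!}{(r+k)!\,(n-k)!}$; inserting it into \eqref{eq:S-def} and reindexing $k\mapsto n-k$ gives the claimed polynomial expansion. Part (ii) then follows instantly by setting $x=-1$ and combining $(-1)^{n-k}(-1)^{k}=(-1)^{n}$.

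For (iii), I apply (ii) separately to $\MS_{(r,n)}(-1)$ and $\MS_{(n,r)}(-1)$. By Pascal symmetry $\binom{r+n}{k}=\binom{r+n}{r+n-k}$, the reindexing $k\mapsto r+n-k$ converts $\sum_{k=0}^{r}\binom{r+n}{k}$ into $\sum_{j=n}^{r+n}\binom{r+n}{j}$; adding this to $\sum_{k=0}^{n}\binom{r+n}{k}$ covers every index in $\{0,\dots,r+n\}$ once, except the middle index $k=n$, which is counted twice. Hence the combined sum equals $2^{r+n}+\binom{r+n}{n}$, and dividing by $\binom{r+n}{r}=\binom{r+n}{n}$ yields the reciprocity.

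For the general part of (iv) I argue by contradiction: if both $\MS_{(r,n)}(-1)$ and $\MS_{(n,r)}(-1)$ lie in $\ZZ$, then (iii) forces $\binom{r+n}{r}\mid 2^{r+n}$, so $\binom{r+n}{r}$ is a power of $2$. Assuming without loss of generality $r\leq n$: if $r\geq 2$, Sylvester's theorem applied to the product $(n+1)(n+2)\cdots(n+r)$ of $r$ consecutive integers each exceeding $r$ produces a prime divisor $p>r\geq 2$ of $\binom{r+n}{r}$, which is odd --- contradiction. If $r=1$, then (i) specialized at $n=1$ gives $\MS_{(n,1)}(x)=x-\tfrac{1}{n+1}$, so $\MS_{(n,1)}(-1)=-\tfrac{n+2}{n+1}\notin\ZZ$ directly for $n\geq 1$, already furnishing a non-integer member of the pair. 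For the three highlighted cases, substituting the standard row-sum identities $\sum_{k=0}^{n}\binom{2n}{k}=\tfrac{1}{2}(2^{2n}+\binom{2n}{n})$, $\sum_{k=0}^{n}\binom{2n+1}{k}=2^{2n}$, and $\sum_{k=0}^{n+1}\binom{2n+1}{k}=2^{2n}+\binom{2n+1}{n}$ into (ii) yields
\[
\MS_{(n,n)}(-1)=(-1)^{n}\!\left(\frac{2^{2n-1}}{\binom{2n}{n}}+\frac{1}{2}\right),\quad
\MS_{(n+1,n)}(-1)=\frac{(-1)^{n}\,2^{2n}}{\binom{2n+1}{n+1}},\quad
\MS_{(n,n+1)}(-1)=(-1)^{n+1}\!\left(\frac{2^{2n}}{\binom{2n+1}{n}}+1\right).
\]
Kummer's theorem gives $\ord_{2}\binom{2n}{n}\leq n$, so the first summand in $\MS_{(n,n)}(-1)$ has $\ord_{2}\geq n-1\geq 0$, strictly greater than $\ord_{2}(\tfrac{1}{2})=-1$; hence $\ord_{2}\MS_{(n,n)}(-1)=-1$. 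For the other two values, Bertrand's postulate supplies an odd prime $p$ with $n+1<p\leq 2n+1$, which divides $\binom{2n+1}{n+1}=\binom{2n+1}{n}$ exactly once, forcing $\ord_{p}$ of both quantities to equal $-1$.

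The principal obstacle is the general part of (iv): the reciprocity (iii) only reduces ``at least one non-integer'' to the divisibility $\binom{r+n}{r}\mid 2^{r+n}$, and eliminating this possibility for all $r,n\geq 1$ requires genuine external input --- Sylvester's theorem in the interior range $r,n\geq 2$, and a direct computation on the boundary $\min(r,n)=1$, where $\binom{r+n}{r}=n+1$ (or $r+1$) can itself be a power of $2$.
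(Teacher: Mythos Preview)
Your proof is correct and follows essentially the same route as the paper: the same binomial identity for (i), the same Pascal-row symmetry for (iii), and the same reduction of (iv) to the existence of an odd prime divisor of $\binom{r+n}{r}$, settled via a Sylvester-type result (the paper quotes Faulkner's sharper version, but Sylvester suffices). Two minor tactical differences are worth noting: for the boundary $\min(r,n)=1$ you exhibit only the easy member $\MS_{(n,1)}(-1)=-\tfrac{n+2}{n+1}$ of the pair, which is enough, whereas the paper separately proves $\MS_{(1,n)}(-1)\notin\ZZ$ via the nontrivial Lemma~\ref{lem:congr-ell}; and for $\MS_{(n,n)}(-1)$ you give a direct $2$-adic computation, while the paper simply observes that the general pair statement already covers the diagonal.
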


In contrast, the related sum to $\MS_{(r,n)}(-1)$ with alternating signs,
\[
  \MS_{(r,n)}(1) = \binom{r+n}{r}^{\!\!-1}
    \sum_{k=0}^{n} \binom{r+n}{k} (-1)^{n-k}
    = \frac{r}{r+n},
\]
is solvable at once by Corollary~\ref{cor:basic}~(iii).

\begin{corollary} \label{cor:row}
Let $n, r \geq 1$. We have
\begin{align*}
  \sum_{k=0}^{n} \binom{r+n}{k}
    &= r \binom{r+n}{r} \int_{0}^{1} (1+t)^n (1-t)^{r-1} dt \\
    &= r \binom{r+n}{r} \sum_{k=0}^{n} \binom{n}{k} (-1)^k \frac{2^{n-k}}{r+k}.
\end{align*}
\end{corollary}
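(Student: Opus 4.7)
The plan is to read off both equalities directly from the three identities already at our disposal: Theorem~\ref{thm:row}(ii), Theorem~\ref{thm:basic}(ii), and Corollary~\ref{cor:basic}(i). Each expresses $\MS_{(r,n)}(-1)$ in a different form, and the claimed chain of equalities is essentially a sign-bookkeeping exercise that links them.

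First I would solve Theorem~\ref{thm:row}(ii) for the partial row sum, obtaining
\[
  \sum_{k=0}^{n} \binom{r+n}{k} = (-1)^n \binom{r+n}{r} \MS_{(r,n)}(-1).
\]
This is the gateway identity; everything reduces to two different evaluations of $\MS_{(r,n)}(-1)$.

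For the middle expression, I substitute $x = -1$ into the integral representation of Theorem~\ref{thm:basic}(ii) and pull the factor $(-1)^n$ out of $(-1-t)^n = (-1)^n(1+t)^n$, which gives
\[
  (-1)^n \MS_{(r,n)}(-1) = r \int_{0}^{1} (1+t)^n (1-t)^{r-1} \, dt.
\]
Multiplying by $\binom{r+n}{r}$ yields the first equality of the corollary.

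For the right-hand side, I apply Corollary~\ref{cor:basic}(i) with $x = -2$, so that $\MS_{(r,n)}(-1) = \MS_{(r,n)}(-2+1) = \sum_{k=0}^{n} \binom{n}{k} (-2)^{n-k} \frac{r}{r+k}$. Splitting the sign as $(-2)^{n-k} = (-1)^{n-k} 2^{n-k}$ and multiplying through by $(-1)^n$ absorbs $(-1)^{n-k}$ into $(-1)^k$, giving
\[
  (-1)^n \MS_{(r,n)}(-1) = r \sum_{k=0}^{n} \binom{n}{k} (-1)^k \frac{2^{n-k}}{r+k}.
\]
Multiplying again by $\binom{r+n}{r}$ matches the right-hand side of the stated corollary. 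There is no real obstacle here beyond tracking the parities of $n$ and $k$; the only thing to be careful with is that the $(-1)^n$ emerging from Theorem~\ref{thm:row}(ii) must cancel against the $(-1)^n$ hidden in both $(-1-t)^n$ and $(-2)^{n-k}$, which it does exactly.
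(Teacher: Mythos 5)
Your argument is correct and is exactly the paper's own proof: the first equality comes from combining Theorem~\ref{thm:row}(ii) with the integral representation of Theorem~\ref{thm:basic}(ii) at $x=-1$, and the second from Corollary~\ref{cor:basic}(i) with $x=-2$, with the signs tracked just as you do.
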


The reciprocity relation of Theorem~\ref{thm:row} can be given in a generalized
form, which then has a different shape. Define the reciprocal polynomial
\[
  \MSR_{(r,n)}(x) = x^n \MS_{(r,n)}(x^{-1}).
\]

\begin{theorem} \label{thm:recip}
Let $n, r \geq 1$. We have the reciprocity relation
\[
  \MS_{(r,n)}(x) + x^n \MSR_{(n,r)}(x)
    = (-1)^r (x-1)^{r+n} \binom{r+n}{r}^{\!\!-1} + x^n.
\]
\end{theorem}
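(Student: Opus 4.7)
The plan is to reduce the reciprocity identity to a direct manipulation of the closed form given in Theorem~\ref{thm:row}(i). Writing $C := \binom{r+n}{r} = \binom{r+n}{n}$, part (i) of that theorem provides
\[
  \MS_{(r,n)}(x) = C^{-1}\sum_{k=0}^{n}\binom{r+n}{k}(-1)^{n-k}x^{k}
  \andq
  \MS_{(n,r)}(x) = C^{-1}\sum_{k=0}^{r}\binom{r+n}{k}(-1)^{r-k}x^{k}.
\]
So the first step is to compute $\MSR_{(n,r)}(x) = x^{r}\MS_{(n,r)}(x^{-1})$ by substituting $x \mapsto x^{-1}$ into the second formula and clearing powers of $x$.

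Next I would re-index. Substituting $j = r-k$ in the resulting expression for $\MSR_{(n,r)}(x)$ and multiplying by $x^{n}$ gives
\[
  x^{n}\MSR_{(n,r)}(x) = C^{-1}(-1)^{n}\sum_{m=n}^{r+n}\binom{r+n}{m}(-1)^{m}x^{m},
\]
after using $\binom{r+n}{r+n-m} = \binom{r+n}{m}$ and absorbing signs. The polynomial $\MS_{(r,n)}(x)$ can be rewritten in the parallel form $C^{-1}(-1)^{n}\sum_{k=0}^{n}\binom{r+n}{k}(-1)^{k}x^{k}$.

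The key observation is then that the two sums, one over $0\le k\le n$ and the other over $n\le m\le r+n$, together cover $\{0,1,\dots,r+n\}$ with a single overlap at the middle index $k=n$. Adding them therefore yields
\[
  \sum_{k=0}^{r+n}\binom{r+n}{k}(-1)^{k}x^{k} \;+\; \binom{r+n}{n}(-1)^{n}x^{n} = (1-x)^{r+n} + C(-1)^{n}x^{n},
\]
where I have used the binomial theorem on the full sum. Multiplying by $C^{-1}(-1)^{n}$ and simplifying $(-1)^{n}(1-x)^{r+n} = (-1)^{r}(x-1)^{r+n}$ produces exactly the claimed right-hand side, and the $x^{n}$ term falls out of the repeated middle contribution.

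The proof is essentially a bookkeeping exercise; there is no real analytic obstacle. The only point that requires care is the overlap at $k=n$ when concatenating the two index ranges, together with tracking the alternating signs through the substitution $x\mapsto x^{-1}$ and the reflection $k\mapsto r-k$. Once those are handled correctly, the binomial theorem does all of the actual work.
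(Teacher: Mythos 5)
Your proof is correct and follows essentially the same route as the paper: both start from Theorem~\ref{thm:row}~(i), express $\MS_{(r,n)}(x)$ and $x^n\MSR_{(n,r)}(x)$ as partial binomial sums over the ranges $0\le k\le n$ and $n\le k\le r+n$, and observe that concatenating them gives the full expansion of $(x-1)^{r+n}$ plus one repeated middle term $\binom{r+n}{n}(-1)^n x^n$. The paper merely packages this overlap observation as the abstract identity \eqref{eq:part-sum} for its partial-sum notation $(x+y)^{n,m}$, whereas you carry out the re-indexing explicitly; the content is identical.
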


For the next applications, we need some recurrence formulas.

\begin{proposition} \label{prop:recur}
Let $n, r \geq 1$. There are the following recurrence formulas:
\begin{enumerate}
\item \listeq{ \hspace*{-0.2em}
  \MS_{(r,n)}(x) = (x-1) \, \MS_{(r,n-1)}(x) + \frac{r}{r+1} \, \MS_{(r+1,n-1)}(x);}

\item \listeq{ \hspace*{-6.9em}
  \MS_{(r,n)}(x) = x^n - \frac{n}{r+1} \, \MS_{(r+1,n-1)}(x);}

\item \listeq{ \hspace*{-3.1em}
  \MS_{(r+1,n)}(x) = \frac{r+1}{r+n+1} \left( x^{n+1} - (x-1) \, \MS_{(r,n)}(x) \right)\!;}

\item \listeq{
  \MS_{(r,n+1)}(x) = \frac{r}{r+n+1} x^{n+1} + \frac{n+1}{r+n+1} (x-1) \, \MS_{(r,n)}(x).}
\end{enumerate}
\end{proposition}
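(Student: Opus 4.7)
My plan is to prove (i) and (ii) directly from the integral representation $\MS_{(r,n)}(x) = r\int_0^1 (x-t)^n (1-t)^{r-1}\,dt$ supplied by Theorem~\pref{thm:basic}(ii), and then to obtain (iii) and (iv) as purely algebraic consequences of (i) and (ii) after shifting~$n$.

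For (i), I would split $(x-t)^n = (x-1)(x-t)^{n-1} + (1-t)(x-t)^{n-1}$ inside the integrand. The contribution of the first summand is $(x-1)\,r\int_0^1 (x-t)^{n-1}(1-t)^{r-1}\,dt = (x-1)\MS_{(r,n-1)}(x)$. The second summand produces $r\int_0^1 (x-t)^{n-1}(1-t)^{r}\,dt$, which, after inserting the factor $(r+1)/(r+1)$, becomes $\frac{r}{r+1}\MS_{(r+1,n-1)}(x)$. Summing gives (i).

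For (ii), I would integrate by parts taking $u=(x-t)^n$ and $dv=(1-t)^{r-1}\,dt$, so that $v=-(1-t)^{r}/r$. Because $r\geq 1$, the boundary term is concentrated at $t=0$ and evaluates to $x^n/r$; the remaining integral collapses to $\frac{n}{r(r+1)}\MS_{(r+1,n-1)}(x)$ after another factor $(r+1)/(r+1)$. Multiplying through by the leading $r$ then gives $\MS_{(r,n)}(x) = x^n - \frac{n}{r+1}\MS_{(r+1,n-1)}(x)$, which is (ii).

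For (iii) and (iv), I would apply (i) and (ii) with $n$ replaced by $n+1$, obtaining two expressions for $\MS_{(r,n+1)}(x)$ in terms of $\MS_{(r,n)}(x)$ and $\MS_{(r+1,n)}(x)$. Equating them and solving for $\MS_{(r+1,n)}(x)$ produces (iii), after the coefficients collapse via $\frac{r}{r+1} + \frac{n+1}{r+1} = \frac{r+n+1}{r+1}$; substituting this expression for $\MS_{(r+1,n)}(x)$ back into the shifted (i) then yields (iv) directly. The only real obstacle is keeping the constants from the exponent shifts and the leading factor $r$ consistent, but this is routine bookkeeping and requires no input beyond the integral representation.
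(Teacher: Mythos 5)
Your proposal is correct and follows essentially the same route as the paper: part (i) via the split $(x-t)^n=(x-1)(x-t)^{n-1}+(1-t)(x-t)^{n-1}$ in the integral representation, part (ii) via integration by parts, and parts (iii)--(iv) by algebraically combining the shifted versions of (i) and (ii). The only cosmetic difference is that the paper derives (iv) by taking $n\cdot(\text{i}) - r\cdot(\text{ii})$ before shifting, whereas you substitute (iii) back into the shifted (i); these are equivalent bookkeeping.
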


To tackle the problem of the nonintegrality and to obtain divisibility properties,
it is convenient to find a further representation of $\MS_{(r,n)}(x)$ as follows.

\begin{theorem} \label{thm:ident}
Let $n, r \geq 1$. We have
\begin{align}
  \MS_{(r,n)}(x) &= (-1)^{r} r! \, \frac{(x-1)^{r+n} - x^{n+1} \, \psi_{(r,n)}(x)}
    {(n+1) \dotsm (n+r)},
  \label{eq:S-ident} \\
\shortintertext{where}
  \psi_{(r,n)}(x) &= \sum_{k=0}^{r-1} \binom{n+k}{k} (-1)^k (x-1)^{r-1-k}
  \label{eq:psi-1} \\
  &= \sum_{k=0}^{r-1} \binom{n+r}{k} (-1)^k x^{r-1-k}.
  \label{eq:psi-2}
\end{align}

\newpage\noindent
In particular, there are the special cases:
\begin{enumerate}
\item \listeq{ \hspace*{-7em}
  \MS_{(1,n)}(x) = \frac{x^{n+1}-(x-1)^{n+1}}{n+1};}

\item \listeq{ \hspace*{-10.7em}
  \MS_{(r,1)}(x) = x - \frac{1}{r+1};}

\item \listeq{
  \MS_{(r,n)}(2) = (-1)^{r} r! \, \frac{1 - 2^{n+1}
    \sum_{k=0}^{r-1} (-1)^k \binom{n+k}{k}}{(n+1) \cdots (n+r)}.}
\end{enumerate}
\end{theorem}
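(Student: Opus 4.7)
The cleanest path is to take Theorem~\ref{thm:row}(i) as the starting point, which already expresses $\MS_{(r,n)}(x)$ as a truncated binomial polynomial. I would first rewrite that identity in the form
\[
\binom{r+n}{r}\MS_{(r,n)}(x) \;=\; (-1)^r \sum_{k=0}^{n} \binom{r+n}{k}(-1)^{r+n-k} x^k,
\]
and then split the full binomial expansion $(x-1)^{r+n} = \sum_{k=0}^{r+n} \binom{r+n}{k}(-1)^{r+n-k}x^k$ at $k=n$. The head is exactly $(-1)^r\binom{r+n}{r}\MS_{(r,n)}(x)$, so everything hinges on identifying the tail $\sum_{k=n+1}^{r+n}\binom{r+n}{k}(-1)^{r+n-k}x^k$ with $x^{n+1}\psi_{(r,n)}(x)$.

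This identification is a one-line reindexing: setting $k=n+1+j$ and using $\binom{r+n}{n+1+j}=\binom{r+n}{r-1-j}$ converts the tail to $x^{n+1}\sum_{j=0}^{r-1}\binom{r+n}{r-1-j}(-1)^{r-1-j}x^j$, and the further substitution $m=r-1-j$ brings it into the form \eqref{eq:psi-2}. Combining the two halves and clearing $\binom{r+n}{r}^{-1}=r!/[(n+1)\cdots(n+r)]$ yields \eqref{eq:S-ident}. To complete the statement I would also verify that \eqref{eq:psi-1} and \eqref{eq:psi-2} define the same polynomial: expanding $(x-1)^{r-1-k}$ in \eqref{eq:psi-1} and matching the coefficient of $x^{r-1-j}$ reduces matters to the Chu--Vandermonde convolution $\sum_{k=0}^{r-1-j}\binom{n+k}{k}\binom{r-1-k}{j}=\binom{n+r}{r-1-j}$.

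The three special cases then drop out quickly. For (i), with $r=1$ only the $k=0$ term of $\psi_{(1,n)}$ survives, giving $\psi_{(1,n)}(x)=1$, so \eqref{eq:S-ident} reduces to $[x^{n+1}-(x-1)^{n+1}]/(n+1)$. For (ii), the defining formula \eqref{eq:S-def} already yields $\MS_{(r,1)}(x)=x-1/(r+1)$ directly, so no further work is needed. For (iii), substituting $x=2$ into \eqref{eq:S-ident} collapses $(x-1)^{r+n}$ to $1$, and \eqref{eq:psi-1} specialises to $\psi_{(r,n)}(2)=\sum_{k=0}^{r-1}(-1)^k\binom{n+k}{k}$. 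There is no real obstacle here: the argument is essentially combinatorial bookkeeping, and the only delicate point is keeping track of the signs $(-1)^{r+n-k}$, $(-1)^{n-k}$, $(-1)^r$ in parallel with the index shifts via $\binom{r+n}{n+1+j}=\binom{r+n}{r-1-j}$.
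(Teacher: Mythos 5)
Your argument is correct, but it follows a genuinely different route from the paper's. The paper first proves \eqref{eq:S-ident} with $\psi_{(r,n)}$ in the form \eqref{eq:psi-1} by induction on $r$, using the recurrence of Proposition~\ref{prop:recur}~(iii) together with the companion recurrence $\psi_{(r+1,n)}(x) = (x-1)\psi_{(r,n)}(x) + (-1)^r\binom{n+r}{r}$; it then obtains the form \eqref{eq:psi-2} separately from the reciprocity relation of Theorem~\ref{thm:recip} combined with Theorem~\ref{thm:row}~(i), and deduces $\psi_{(r,n)} = \widetilde{\psi}_{(r,n)}$ by comparing the two derivations. Your head/tail splitting of $(x-1)^{r+n}$ at $k=n$ is in substance a direct re-derivation of what Theorem~\ref{thm:recip} encodes (its proof is exactly the partial binomial sum identity \eqref{eq:part-sum}), so that half of your argument parallels the paper's second step; where you genuinely diverge is in proving the equality of \eqref{eq:psi-1} and \eqref{eq:psi-2} by an explicit Chu--Vandermonde convolution, $\sum_{k=0}^{r-1-j}\binom{n+k}{k}\binom{r-1-k}{j}=\binom{n+r}{r-1-j}$, rather than by having two independent proofs of the same identity. (Minor bookkeeping: as written, that convolution matches the coefficient of $x^{j}$, not of $x^{r-1-j}$; the identity itself is the correct one.) Your route buys a shorter, induction-free proof that makes the combinatorial content of $\psi_{(r,n)}$ transparent; the paper's induction has the side benefit of exhibiting the recurrence for $\psi_{(r,n)}$ explicitly and of keeping the two closed forms \eqref{eq:psi-1} and \eqref{eq:psi-2} tied to the two structural identities (the recurrence and the reciprocity) from which they naturally arise. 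The three special cases are handled identically in both treatments.
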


%%%%%%%%%%%%%%%%%%%%%%%%%%%%%%%%%%%%%%%%%%%%%%%%%%%%%%%%%%%%%%%%%%%%%%%%%%%%%%%%
% Section
%%%%%%%%%%%%%%%%%%%%%%%%%%%%%%%%%%%%%%%%%%%%%%%%%%%%%%%%%%%%%%%%%%%%%%%%%%%%%%%%

\section{Main results}
\label{sec:main}

In this section, we derive several conditions on the nonintegrality of
$\MS_{(r,n)}(\ell)$. Let $p$ denote always a prime.
Let $\ord_p(n)$ and $s_p(n)$ be the $p$-adic valuation and the sum of base-$p$
digits of $n$, respectively. The notation ${p^e \dmid n}$ means that
${p^e \mid n}$ but ${p^{e+1} \nmid n}$, i.e., $\ord_p(n) = e$.
The following two results give conditions to test the (non-) integrality via
congruences.

\begin{proposition} \label{prop:congr-1}
Let $n, r \geq 1$ and $\ell \in \ZZ$. Then $\MS_{(r,n)}(\ell) \in \ZZ$
if and only if
\[
  \sum_{k=0}^{n-1} \binom{r+n}{k} (-\ell)^k \equiv 0 \pmod{\binom{r+n}{r}}.
\]
\end{proposition}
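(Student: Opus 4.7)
The plan is to apply Theorem~\ref{thm:row}(i), which already expresses $\MS_{(r,n)}(x)$ as $\binom{r+n}{r}^{-1}$ times an integer-coefficient polynomial in $x$. Specializing $x = \ell$ gives the integer test directly, and the remaining work is just cosmetic: folding the alternating sign into the base and trimming the top term of the sum.

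More concretely, I would first substitute $x = \ell$ in Theorem~\ref{thm:row}(i) to obtain
\[
\MS_{(r,n)}(\ell) = \binom{r+n}{r}^{\!\!-1} \sum_{k=0}^{n} \binom{r+n}{k} (-1)^{n-k} \ell^k.
\]
Using $(-1)^{n-k} = (-1)^n (-1)^k$, I rewrite the right-hand side as
\[
(-1)^n \binom{r+n}{r}^{\!\!-1} \sum_{k=0}^{n} \binom{r+n}{k} (-\ell)^k.
\]
Since $\binom{r+n}{r}^{-1}$ is a positive rational and the sign $(-1)^n$ is a unit, the value $\MS_{(r,n)}(\ell)$ lies in $\ZZ$ if and only if $\binom{r+n}{r}$ divides the full sum $\sum_{k=0}^{n}\binom{r+n}{k}(-\ell)^k$ in $\ZZ$.

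The final step is to eliminate the $k=n$ term from the divisibility condition. That term equals $\binom{r+n}{n}(-\ell)^n = \binom{r+n}{r}(-\ell)^n$, which is an integer multiple of $\binom{r+n}{r}$ and can therefore be dropped from the congruence. This reduces the condition to
\[
\sum_{k=0}^{n-1} \binom{r+n}{k}(-\ell)^k \equiv 0 \pmod{\binom{r+n}{r}},
\]
exactly as claimed. There is no real obstacle here — the argument is a direct specialization of Theorem~\ref{thm:row}(i) combined with an arithmetic cleanup of the last summand; the only mild care needed is tracking the sign $(-1)^{n-k}$ correctly so that the integer $(-\ell)^k$ appears in the congruence rather than $\ell^k$ with an ambient global sign.
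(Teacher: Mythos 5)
Your argument is correct and is exactly the route the paper takes: its proof of this proposition is simply ``this easily follows from Theorem~\ref{thm:row}~(i)'', and your specialization at $x=\ell$, the sign manipulation $(-1)^{n-k}\ell^k=(-1)^n(-\ell)^k$, and the removal of the $k=n$ term via $\binom{r+n}{n}=\binom{r+n}{r}$ are precisely the omitted details. Nothing is missing.
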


\begin{proposition} \label{prop:congr-2}
Let $n, r \geq 1$ and $\ell \in \ZZ \setminus \set{0,1}$.
If there exists an index $d \in \set{1,\ldots,r}$ where
\[
  r! \, (\ell-1)^{r-d} \left( (\ell-1)^{n+d} - \ell^{n+d} \right)
    \not\equiv 0 \pmod{n+d},
\]
then $\MS_{(r,n)}(\ell) \notin \ZZ$.
\end{proposition}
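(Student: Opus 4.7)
The plan is to argue by contrapositive: assume $\MS_{(r,n)}(\ell)\in\ZZ$ and deduce, for every $d\in\{1,\ldots,r\}$, the divisibility
\[
  n+d \,\mid\, r!\,(\ell-1)^{r-d}\bigl((\ell-1)^{n+d}-\ell^{n+d}\bigr).
\]
The starting point is identity \eqref{eq:S-ident} from Theorem~\ref{thm:ident}: multiplying both sides by $(n+1)(n+2)\cdots(n+r)$ and invoking the integrality assumption, the left side is divisible by $n+d$, hence the right side satisfies
\[
  r!\bigl[(\ell-1)^{r+n} - \ell^{n+1}\,\psi_{(r,n)}(\ell)\bigr] \equiv 0 \pmod{n+d}.
\]
Writing $(\ell-1)^{r+n} = (\ell-1)^{r-d}(\ell-1)^{n+d}$ and $\ell^{n+1}\cdot\ell^{d-1}=\ell^{n+d}$, the displayed target will follow at once from the key congruence
\[
  r!\,\psi_{(r,n)}(\ell) \equiv r!\,(\ell-1)^{r-d}\,\ell^{d-1} \pmod{n+d}.
\]

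To prove this key congruence I would expand $\psi_{(r,n)}$ via \eqref{eq:psi-1} and reduce each coefficient modulo $n+d$. For indices $k\geq d$ the factorisation $k!\binom{n+k}{k}=(n+1)(n+2)\cdots(n+k)$ exhibits $n+d$ as one of the factors, and since $r!/k!$ is a positive integer for $k\leq r-1$, we conclude $n+d\mid r!\binom{n+k}{k}$. For $k<d$ the substitution $n+i\equiv i-d\pmod{n+d}$ gives
\[
  (n+1)(n+2)\cdots(n+k) \,\equiv\, (-1)^k\,\frac{(d-1)!}{(d-1-k)!}\pmod{n+d},
\]
from which $r!\binom{n+k}{k}(-1)^k \equiv r!\binom{d-1}{k}\pmod{n+d}$. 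Summing $r!\,\psi_{(r,n)}(\ell)$ term by term, only the indices $k<d$ survive; by the binomial theorem the surviving sum is
\[
  r!(\ell-1)^{r-d}\sum_{k=0}^{d-1}\binom{d-1}{k}(\ell-1)^{d-1-k} = r!(\ell-1)^{r-d}\,\ell^{d-1},
\]
which is exactly the claimed key congruence.

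The main obstacle is bookkeeping with denominators: $\binom{n+k}{k}$ on its own is only a rational-looking expression in $n$, and one cannot prematurely cancel the $k!$ in the denominator against factors of $n+d$. Multiplying through by $r!$ resolves this because $r!/k!$ remains in $\ZZ$ for $k\leq r-1$, keeping every congruence inside the integers. Finally, the exclusion $\ell\ne 1$ is needed so that the test is non-vacuous for $d<r$ (otherwise the factor $(\ell-1)^{r-d}$ vanishes), while $\ell\ne 0$ is excluded only because $\MS_{(r,n)}(0)\notin\ZZ$ is already trivial from Corollary~\ref{cor:basic}(ii).
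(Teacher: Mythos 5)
Your argument is correct and follows essentially the same route as the paper: you pass to the contrapositive via identity \eqref{eq:S-ident}, and your ``key congruence'' $r!\,\psi_{(r,n)}(\ell)\equiv r!\,\ell^{d-1}(\ell-1)^{r-d}\pmod{n+d}$ is precisely the paper's Lemma~\ref{lem:psi-congr}, which you re-derive by the same reduction $(n+1)\cdots(n+k)\equiv(-1)^k(d-1)_k\pmod{n+d}$ and the binomial theorem. Nothing further is needed.
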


Regarding the properties of $\MS_{(r,n)}(\ell)$,
we have a kind of reciprocity relation between the parameters $n$ and $r$,
as well as a symmetry relation of $\ell$.

\begin{theorem} \label{thm:main}
Let $n, r \geq 1$ and $\ell \in \ZZ$.
Set $g = \gcd \mleft( \binom{r+n}{r}, \ell \mright)$
and $e_p = \ord_p \mleft( \binom{r+n}{r} \mright)$.
Assume that one of the following conditions holds:
\begin{enumerate}
\item $r=1$ or $n=1$;
\item $r=n$;
\item $r+n$ is a prime power;
\item $r$ and $n$ are odd, and $\ell$ is even;
\item $\ell \in \set{-1,0,1}$;
\item $g \neq 1$.
\end{enumerate}
Then we have that $\MS_{(r,n)}(\pm \ell) \notin \ZZ$ and $\MS_{(n,r)}(\pm \ell) \notin \ZZ$,
except for the case when only condition~(v) holds with $\ell = \pm 1$,
where at least $\MS_{(r,n)}(-1) \notin \ZZ$ or $\MS_{(n,r)}(-1) \notin \ZZ$.
Moreover, if~$g \neq 1$, then we have for each prime divisor $p \mid g$ that
\[
  p^{e_p} \dmid \denom( \MS_{(r,n)}(\pm \ell) ) \andq
  p^{e_p} \dmid \denom( \MS_{(n,r)}(\pm \ell) ).
\]
\end{theorem}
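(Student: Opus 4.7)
The plan is to dispatch the six conditions in turn, using as main tool the expansion from Theorem~\ref{thm:row}(i),
\[
  \MS_{(r,n)}(\ell) = \binom{r+n}{r}^{-1} \sum_{k=0}^{n} \binom{r+n}{k}(-1)^{n-k}\ell^k,
\]
together with the congruence criteria of Propositions~\ref{prop:congr-1} and~\ref{prop:congr-2}. The symmetry $\binom{r+n}{r} = \binom{r+n}{n}$ makes the claims for $\MS_{(n,r)}$ run in parallel to those for $\MS_{(r,n)}$, and replacing $\ell$ by $-\ell$ only changes signs in individual terms; I therefore focus throughout on $\MS_{(r,n)}(\ell)$.

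First I would knock off the bookkeeping cases. For (v), Corollary~\ref{cor:basic} gives $\MS_{(r,n)}(0) = (-1)^n \binom{r+n}{r}^{-1}$ and $\MS_{(r,n)}(1) = r/(r+n)$, both visibly non-integral (since $r+n\geq 2$ and $0<r<r+n$), while the case $\ell=-1$ is exactly Theorem~\ref{thm:row}(iv), which is where the ``except'' clause originates. For (vi), each prime $p\mid g$ divides $\ell$, so every summand in the numerator above with $k\geq 1$ carries a factor of $p$, leaving the $k=0$ term $\pm 1$ as a $p$-adic unit; dividing by $\binom{r+n}{r}$ then yields $\ord_p(\MS_{(r,n)}(\ell))=-e_p$, which simultaneously establishes $p^{e_p}\dmid\denom(\MS_{(r,n)}(\ell))$ and non-integrality. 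Case (iv) reduces to (vi): adding two odd integers forces a carry in the units place of their base-$2$ expansion, so Kummer's theorem gives $\ord_2\binom{r+n}{r}\geq 1$, and with $\ell$ even we obtain $2\mid g$.

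For (i), assume $r=1$ by the $r\leftrightarrow n$ symmetry; Theorem~\ref{thm:ident}(i) then gives $\MS_{(1,n)}(\ell)=(\ell^{n+1}-(\ell-1)^{n+1})/(n+1)$, so the task is to prove $(n+1)\nmid \ell^{n+1}-(\ell-1)^{n+1}$ for every $\ell\in\ZZ$. If $n+1$ is even the numerator is odd (since $\ell$ and $\ell-1$ have opposite parities), settling the $2$-adic case. If $n+1$ is odd, let $p$ be its smallest prime divisor; when $p\mid\ell(\ell-1)$ the numerator is $\equiv\pm 1\pmod{p}$, and otherwise the multiplicative order of $\ell/(\ell-1)\bmod p$ would have to divide $\gcd(n+1,p-1)$, which equals $1$ by minimality of $p$ (every prime divisor of $p-1$ is strictly smaller than $p$ and hence cannot divide $n+1$), forcing the contradiction $\ell\equiv\ell-1\pmod{p}$.

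The main obstacles are (ii) and (iii), both requiring finer $p$-adic work. For (iii), $r+n=p^e$ and Kummer's formula $\ord_p\binom{p^e}{k}=e-\ord_p(k)$ for $1\leq k<p^e$ lets me split the numerator by $\ord_p(k)$; comparing against $e_p=e-\ord_p(r)$, the $k=0$ term $(-1)^n$ should emerge as the dominant residue modulo $p^{e_p}$, provided I verify that no accidental cancellation occurs among the indices with $\ord_p(k)>\ord_p(r)$---this bookkeeping is the delicate point I expect to be hardest. For (ii), with $r=n$, Bertrand's postulate supplies a prime $p\in(n,2n]$ with $\ord_p\binom{2n}{n}=1$, and Lucas's theorem collapses the numerator modulo $p$ to $(-1)^n(1-\ell)^{2n-p}$, non-vanishing unless $\ell\equiv 1\pmod p$; the residual case I would treat by iterating modulo $p^2$, by using another prime divisor of $\binom{2n}{n}$, or by invoking the reciprocity of Theorem~\ref{thm:recip} at $x=\ell$ to trade $\MS_{(n,n)}(\ell)$ for $\ell^n\MSR_{(n,n)}(\ell)$.
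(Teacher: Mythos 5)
Your treatment of cases (i), (iv), (v) and (vi) is sound, and in fact for (vi) and (iii) your route through the expansion $\MS_{(r,n)}(\ell)=\binom{r+n}{r}^{-1}\sum_{k=0}^{n}\binom{r+n}{k}(-1)^{n-k}\ell^{k}$ is cleaner than the paper's: the paper proves (vi) via the decomposition $(-1)^r\MS_{(r,n)}(\ell)=\bigl((\ell-1)^{r+n}-\ell^{n+1}\psi_{(r,n)}(\ell)\bigr)/\binom{r+n}{r}$ and proves (iii) from the representation $\sum_k(-1)^k\ell^{n-k}(n)_k/(r+k)_k$, where the \emph{last} summand dominates. Your worry about ``accidental cancellation'' in (iii) is also unfounded: when $r+n=p^e$, Lemma~\ref{lem:binom-p} gives $\ord_p\binom{p^e}{k}\geq 1$ for all $0<k\leq n<p^e$, so the $k=0$ term $(-1)^n$ is the \emph{unique} summand of valuation $0$ and the ultrametric inequality immediately yields $\ord_p(\MS_{(r,n)}(\ell))=-e_p\leq -1$; no stratification by $\ord_p(k)$ is needed.

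The genuine gap is case (ii). Your Bertrand/Lucas computation correctly reduces the numerator modulo a prime $p\in(n,2n)$ to $(-1)^n(1-\ell)^{2n-p}$, but this tells you nothing when $\ell\equiv 1\pmod p$, and none of your three fallbacks closes that hole. Iterating modulo $p^2$ cannot help, because $\ord_p\binom{2n}{n}=1$ for such $p$: once $p$ divides the numerator, $\MS_{(n,n)}(\ell)$ is $p$-integral and the prime $p$ is useless. Switching to another prime divisor fails for the same reason: by the Chinese Remainder Theorem there are odd $\ell$ with $\ell\equiv 1$ modulo every odd prime divisor of $\binom{2n}{n}$, so every odd prime is dead on arrival and the entire burden falls on $p=2$. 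The $2$-adic case is genuinely delicate, since $\ord_2\binom{2n}{n}=s_2(n)$ can be large while one must show the numerator is divisible by exactly $2^{s_2(n)-1}$; a term-by-term valuation count does not give this. The paper's proof (folded into Theorem~\ref{thm:diag}) handles it by a different mechanism entirely: for even $\ell$ it falls back on case (vi) (since $2\mid\binom{2n}{n}$ always), and for odd $\ell$ it runs an induction on $n$ via the recurrence $\MS_{(n+1,n+1)}(x)=A_n(x)+\tfrac12 x^{n+1}$ from Proposition~\ref{prop:recur}, showing $\MS_{(n,n)}(\ell)\in\tfrac12\ZZ_2\setminus\ZZ_2$ because $\ell-1$ is even forces $A_n(\ell)\in\ZZ_2$. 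The reciprocity relation at $x=\ell$ does not rescue your approach either: for $r=n$ it relates $\MS_{(n,n)}(\ell)$ to $\ell^n\MSR_{(n,n)}(\ell)$, which is not $\MS_{(n,n)}(\ell)$ itself, and when $p\mid\ell-1$ the right-hand side is $p$-integral, so no contradiction arises. You need the inductive $2$-adic argument (or an equivalent) to complete case (ii).
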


The diagonal case $r=n$ can be handled in more detail as follows.

\begin{theorem} \label{thm:diag}
Let $n \geq 1$. We have
\[
  \MS_{(n,n)}(x) = \binom{2n}{n}^{\!\!-1}
    \sum_{k=0}^{n} \binom{2n}{k} (-1)^{n-k} \, x^k,
\]
which obeys the recurrence
\[
  2\MS_{(n+1,n+1)}(x)
    = \frac{n+1}{2n+1} (x-1) \left( x^{n+1}
    - (x-1) \MS_{(n,n)}(x)\right) + x^{n+1}
\]
with $\MS_{(1,1)}(x) = x - \frac{1}{2}$.
For $\ell \in \ZZ$, we have $\MS_{(n,n)}(\ell) \notin \ZZ$.
More precisely,
\[
  \ord_2(\denom(\MS_{(n,n)}(\ell))) =
    \begin{cases}
      1, & \text{if $\ell$ is odd};\\
      s_2(n), & \text{if $\ell$ is even}.\\
    \end{cases}
\]
\end{theorem}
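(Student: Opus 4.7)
The closed-form formula for $\MS_{(n,n)}(x)$ is an immediate specialization of Theorem~\ref{thm:row}(i) at $r = n$. To obtain the recurrence, my plan is to chain Proposition~\ref{prop:recur}(iv) (with $r = n$), which expresses $\MS_{(n,n+1)}(x)$ in terms of $\MS_{(n,n)}(x)$, with Proposition~\ref{prop:recur}(iii) (with $r \mapsto n$ and $n \mapsto n+1$), which then gives $\MS_{(n+1,n+1)}(x)$ in terms of $\MS_{(n,n+1)}(x)$. Since $\frac{n+1}{2n+2} = \frac{1}{2}$, the doubling factor in the stated formula appears naturally, and a short polynomial simplification yields the asserted identity. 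The initial value $\MS_{(1,1)}(x) = x - \tfrac{1}{2}$ is Theorem~\ref{thm:ident}(ii) at $r = 1$.

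The substance of the theorem is the 2-adic valuation computation, which I would prove by induction on $n$ using the recurrence above. Writing
\[
  U_n := \ell^{n+1} - (\ell-1)\,\MS_{(n,n)}(\ell),
\]
the recurrence becomes $2\MS_{(n+1,n+1)}(\ell) = \frac{n+1}{2n+1}(\ell-1)\,U_n + \ell^{n+1}$, and the factor $\frac{1}{2n+1}$ is a 2-adic unit. The base case $n = 1$ is clear: $\MS_{(1,1)}(\ell) = \ell - \tfrac{1}{2}$ has denominator $2 = 2^{s_2(1)}$ regardless of the parity of $\ell$.

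If $\ell$ is odd, then $\ell-1$ is even while $\ell^{n+1}$ is an odd integer. The inductive hypothesis $\ord_2(\MS_{(n,n)}(\ell)) = -1$ gives $\ord_2((\ell-1)\MS_{(n,n)}(\ell)) = \ord_2(\ell-1) - 1 \geq 0$, so $U_n$ is a 2-integer. Therefore the first term of the right-hand side has $\ord_2 \geq \ord_2(\ell-1) \geq 1$, while $\ell^{n+1}$ contributes $\ord_2 = 0$; the ultrametric inequality then forces $\ord_2(2\MS_{(n+1,n+1)}(\ell)) = 0$ and hence $\ord_2(\MS_{(n+1,n+1)}(\ell)) = -1$, matching $-s_2(\cdot) = -1$ in the power-of-two case for odd $\ell$.

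If $\ell$ is even, then $\ell - 1$ is odd and $\ord_2(\ell^{n+1}) \geq n+1$. By the inductive hypothesis, $\ord_2((\ell-1)\MS_{(n,n)}(\ell)) = -s_2(n)$, and since $-s_2(n) < n+1$ the smaller term dominates, giving $\ord_2(U_n) = -s_2(n)$. The first term of the right-hand side then has $\ord_2 = \ord_2(n+1) - s_2(n) \leq \log_2(n+1) - 1 < n+1 \leq \ord_2(\ell^{n+1})$, so no additive cancellation is possible and $\ord_2(2\MS_{(n+1,n+1)}(\ell)) = \ord_2(n+1) - s_2(n)$. Applying the elementary identity $s_2(n+1) = s_2(n) + 1 - \ord_2(n+1)$ then closes the induction to yield $\ord_2(\MS_{(n+1,n+1)}(\ell)) = -s_2(n+1)$. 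Non-integrality follows in both cases since the 2-adic valuation of the denominator is always at least $1$. The principal delicacy, which I flag as the main obstacle, is verifying that in each case the two terms on the right-hand side lie at strictly different 2-adic heights so that cancellation cannot occur; the crude estimate $\ord_2(n+1) - s_2(n) \leq \log_2(n+1) - 1$ paired with $\ord_2(\ell^{n+1}) \geq n+1$ handles the even case, while in the odd case the asymmetry between $\ord_2(\ell^{n+1}) = 0$ and $\ord_2(\ell-1) \geq 1$ suffices.
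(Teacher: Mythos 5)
Your proposal is correct, and for the closed form, the recurrence, and the odd-$\ell$ case it coincides with the paper's proof: the paper likewise specializes Theorem~\ref{thm:row}~(i), chains Proposition~\ref{prop:recur}~(iii) and (iv), and runs exactly your $2$-adic induction for odd $\ell$ (writing $\MS_{(n+1,n+1)}(x) = A_n(x) + \tfrac12 x^{n+1}$ with $A_n(\ell) \in \ZZ_2$). Where you genuinely diverge is the even-$\ell$ case. The paper does not continue the induction there; it invokes Theorem~\ref{thm:main}~(vi) with $g = \gcd\bigl(\binom{2n}{n},\ell\bigr)$, noting $2 \mid \binom{2n}{n} = 2\binom{2n-1}{n-1}$, to get $\ord_2(\MS_{(n,n)}(\ell)) = -\ord_2\bigl(\binom{2n}{n}\bigr)$ directly, and then evaluates this as $-(2s_2(n)-s_2(2n)) = -s_2(n)$ via Legendre's formula~\eqref{eq:binom-2}. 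Your route instead tracks the valuation through the recurrence, using that $\ell-1$ is a $2$-adic unit, that $\ord_2(\ell^{n+1}) \geq n+1$ strictly exceeds $\ord_2(n+1)-s_2(n) \leq n-1$ (so no cancellation occurs, by Lemma~\ref{lem:val-min}), and the carry identity $s_2(n+1) = s_2(n) + 1 - \ord_2(n+1)$ to close the induction. Both arguments are sound; the paper's is shorter because it leans on the already-established machinery of Theorem~\ref{thm:ident} behind Theorem~\ref{thm:main}~(vi), while yours is self-contained, treats $\ell = 0$ uniformly (the paper must route $\ell=0$ through Corollary~\ref{cor:basic}~(ii)), and in effect re-derives $\ord_2\bigl(\binom{2n}{n}\bigr) = s_2(n)$ from the recurrence. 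One cosmetic slip: your remark that the odd case "matches $-s_2(\cdot) = -1$ in the power-of-two case" is a non sequitur — the target for odd $\ell$ is simply the constant $1$ for the denominator, which is what you prove — but it does not affect the argument.
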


Since for any given positive integer $\ell$, almost all binomial coefficients
(in the sense of a density) are divisible by $\ell$ (this is due to Singmaster;
see Theorem~\ref{thm:Singmaster}), this implies the following corollary of
Theorem~\ref{thm:main}.

\begin{corollary} \label{cor:density}
Define the following sets for $m \geq 2$ and $\ell \in \ZZ$:
\[
  \NI_m(\ell) = \set{ (r,n) \in \ZZ^2 : n,r \geq 1, \, 1 \leq r+n \leq m,
    \;\text{and}\;\; \MS_{(r,n)}(\ell) \notin \ZZ}.
\]
If $|\ell| \geq 2$, then we have the density
\[
  \lim_{m \to \infty} \# \NI_m(\ell) \Big/ \binom{m}{2} = 1,
\]
which implies that almost all values of $\MS_{(r,n)}(\ell)$ are nonintegral for
$n,r \geq 1$.
\end{corollary}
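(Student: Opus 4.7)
The plan is to derive the corollary as a direct consequence of condition~(vi) of Theorem~\ref{thm:main} together with Singmaster's density theorem (Theorem~\ref{thm:Singmaster}). The core observation is that condition~(vi) converts any non-trivial common factor between $\ell$ and $\binom{r+n}{r}$ into non-integrality of $\MS_{(r,n)}(\ell)$, while Singmaster's result guarantees that such common factors occur generically.

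First, I would fix $\ell \in \ZZ$ with $|\ell| \geq 2$ and choose any prime $p$ dividing $\ell$. For every pair $(r,n)$ with $r,n \geq 1$ and $p \mid \binom{r+n}{r}$, the gcd $g = \gcd(\binom{r+n}{r},\ell)$ is at least $p > 1$, so condition~(vi) of Theorem~\ref{thm:main} yields $\MS_{(r,n)}(\ell) \notin \ZZ$, and hence $(r,n) \in \NI_m(\ell)$. The exceptional clause of Theorem~\ref{thm:main}, which triggers only for $\ell = \pm 1$, is irrelevant here. Thus it suffices to show that the proportion of pairs $(r,n)$ with $r,n \geq 1$, $r+n \leq m$, and $p \mid \binom{r+n}{r}$ tends to $1$ as $m \to \infty$.

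Second, I would invoke Theorem~\ref{thm:Singmaster} for the fixed prime $p$: the proportion of pairs $(N,k)$ with $0 \leq k \leq N \leq m$ satisfying $p \mid \binom{N}{k}$ tends to $1$. Reparametrizing by $N = r+n$ and $k = r$, this transfers directly to the count inside $\NI_m(\ell)$, modulo the $O(m)$ boundary pairs (where $k \in \set{0,N}$ or $N \leq 1$), which are negligible against $\binom{m}{2}$. Dividing by $\binom{m}{2}$ and letting $m \to \infty$ produces the claimed limit~$1$.

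The only subtlety---hardly an obstacle---is a routine verification of normalization: Singmaster's count is typically stated over all $\binom{m+2}{2}$ ordered pairs in the closed triangle $0 \leq k \leq N \leq m$, whereas $\NI_m(\ell)$ is measured against the $\binom{m}{2}$ interior pairs with $r,n \geq 1$. Since the two totals differ by only $O(m)$, the corresponding densities share the same limit, so no further analytic input is required; the genuine content has already been packaged into Theorem~\ref{thm:main}(vi) and into Singmaster's theorem.
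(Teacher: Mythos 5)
Your proposal is correct and follows essentially the same route as the paper: apply Theorem~\ref{thm:main}(vi) to reduce non-integrality to a divisibility condition on $\binom{r+n}{r}$, then invoke Singmaster's theorem and absorb the $O(m)$ boundary terms into the normalization. The only cosmetic difference is that you take $d$ to be a prime divisor of $\ell$ while the paper takes $d = |\ell|$; both choices yield $\gcd\bigl(\binom{r+n}{r},\ell\bigr) > 1$ on a set of density one.
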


Using Theorem~\ref{thm:main} for even $\ell \in \ZZ \setminus \set{0}$, we
arrive at a particular case. If $\binom{r+n}{r}$ is even (e.g., $r$ and $n$
are odd), then $\MS_{(r,n)}(\ell) \notin \ZZ$. This coincides with Pascal's
triangle modulo~$2$, which is known as the Sierpi\'{n}ski gasket~\cite{Sierpinski:1915}.
See Figure~\ref{fig:triangle}, where small black triangles represent
the odd binomial coefficients, and the blanks represent the even ones.

\begin{figure}[H]
\begin{center}
\includegraphics[width=5cm]{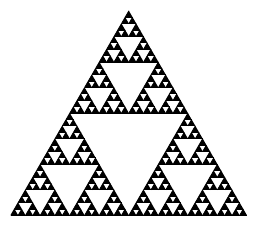}

\caption{Sierpi\'{n}ski gasket.}
\label{fig:triangle}
\end{center}
\end{figure}

At the end of this section, we consider the situation of inequalities between
the parameters of $\MS_{(r,n)}(\ell)$, supplementing the results of
Theorem~\ref{thm:main}. We use several known results on primes in short intervals,
which will be introduced in Section~\ref{sec:prelim}.

\begin{theorem} \label{thm:range}
Let $n, r \geq 2$ and $\ell \in \ZZ \setminus \set{0,1}$.
Set $g = \gcd \mleft( \binom{r+n}{r}, \ell-1 \mright)$ and
$\MP = \set{p : p > \frac{3}{2} n,\, p \mid \binom{r+n}{r},\, p \nmid \ell-1}$.
We have $\MS_{(r,n)}(\ell) \notin \ZZ$,
if one of the following mutually exclusive conditions holds:
\begin{enumerate}
\item $n > r \geq \frac{1}{5} n$ where $n \geq |\ell-1|$
(if $n \geq 89\,693$, then $r \geq \frac{1}{5} n$ can be improved by
$r > n / \!\log^3 n$);

\item $r > n$ where $n \geq \frac{2}{3} |\ell-1|$ or $g=1$ or $\MP \neq \emptyset$.
\end{enumerate}
\end{theorem}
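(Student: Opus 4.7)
Our master tool is Proposition \ref{prop:congr-2}: to establish $\MS_{(r,n)}(\ell) \notin \ZZ$, it suffices to exhibit an index $d \in \{1,\ldots,r\}$ and a prime $p$ with $p^{\ord_p(n+d)} \nmid r!(\ell-1)^{r-d}\bigl((\ell-1)^{n+d} - \ell^{n+d}\bigr)$. The cleanest instance is a prime $p$ such that $n+d = p$ for some $d \in \{1,\ldots,r\}$, $p > r$, and $p \nmid \ell-1$: Fermat's little theorem gives $(\ell-1)^p - \ell^p \equiv -1 \pmod{p}$, while $r!(\ell-1)^{r-d}$ is a unit modulo $p$, so the congruence fails. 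When such a ``large'' prime is unavailable, one falls back on a higher-power analysis via Proposition \ref{prop:congr-1} combined with Theorem \ref{thm:row}(i), evaluating $\binom{n+r}{k}$ modulo $p^{e_p}$ by Lucas--Kummer. The whole proof is thus reduced to producing a suitable prime in each regime, using the short-interval theorems collected in Section~\ref{sec:prelim}.

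\textbf{Case (i).} Let $n > r \geq \tfrac{1}{5} n$ and $n \geq |\ell-1|$. Any prime $p \in (n, n+r]$ yields $p > n > r$ and $p > n \geq |\ell-1|$, so the clean argument applies with $d = p - n$. Since $(n, 6n/5] \subseteq (n, n+r]$ when $r \geq n/5$, Nagura's theorem provides such a prime for all $n \geq 25$. For $2 \leq n < 25$, the hypothesis $n \geq |\ell-1|$ leaves only finitely many admissible triples; those not covered by the clean argument are finished by examining $\binom{n+r}{r}$ modulo one of its small prime-power divisors $p^{e_p}$. The sharper claim $r > n/\log^3 n$ at $n \geq 89{,}693$ replaces Nagura with a Dusart-type explicit short-interval bound, the threshold $89{,}693$ being precisely the range of validity of that bound.

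\textbf{Case (ii).} Let $r > n$. Here we seek a prime $p \in (r, n+r]$ with $p \nmid \ell-1$, triggering the Fermat argument at $d = p - n$. Under $n \geq \tfrac{2}{3}|\ell-1|$, we have $\tfrac{3}{2}n \geq |\ell-1|$, so any prime $p > \tfrac{3}{2}n$ automatically avoids $\ell-1$, and a Bertrand/Nagura estimate on $(r, n+r]$ (now applicable because $n$ is large enough relative to $|\ell-1|$) furnishes it. Under $g = 1$, every prime divisor of $\binom{n+r}{r}$ is coprime to $\ell-1$, so it is enough to exhibit a prime divisor exceeding $r$; this follows from a Sylvester--Schur-type lemma applied to the product $(n+1)\cdots(n+r)$. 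Under $\MP \neq \emptyset$, we pick $p \in \MP$ directly: whenever $r \leq \tfrac{3}{2}n$, the bound $p > \tfrac{3}{2}n$ already places $p$ in $(r, n+r]$, while the residual subcase $r > \tfrac{3}{2}n$ is handled by a $p$-adic descent that uses the explicit representation in Theorem~\ref{thm:ident} to bound $\ord_p$ of the numerator.

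The principal obstacle is the synthesis in case (ii): the three alternative hypotheses are not interchangeable, each targeting a different regime (small $|\ell-1|$, mutual coprimality of $\binom{r+n}{r}$ and $\ell-1$, or the existence of a large prime factor of the binomial), and one must verify that their disjunction really does cover every triple $(r,n,\ell)$ the theorem addresses, in particular the awkward range $r > \tfrac{3}{2}n$ where no prime is guaranteed in $(r, n+r]$. The small-$n$ cleanup in case (i) and the explicit threshold $89{,}693$ are routine given the estimates of Section~\ref{sec:prelim}.
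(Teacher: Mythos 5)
Your Case (i) is essentially the paper's argument: Nagura supplies a prime $p\in(n,n+r)$, one sets $d=p-n$, and since $p>n>r$ and $p>n\geq|\ell-1|$ the quantity $r!\,(\ell-1)^{r-d}\bigl((\ell-1)^{p}-\ell^{p}\bigr)\equiv -\,r!\,(\ell-1)^{r-d}\not\equiv 0\pmod p$ by Fermat, so Proposition~\ref{prop:congr-2} applies; the range $n<25$ is a finite check. That part is fine.

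Case (ii) contains a genuine gap. Your mechanism still insists on a prime $p=n+d$ with $d\leq r$ and $p>r$ (the latter is forced, since Proposition~\ref{prop:congr-2} carries the factor $r!$ and the test is vacuous once $p\mid r!$), i.e.\ an actual prime in the interval $(r,r+n]$. When $r>n$ this interval has length $n<r$, and no Bertrand- or Nagura-type theorem produces a prime there: for fixed $n$ and large $r$ the prime gaps exceed $n$, so such a prime simply need not exist, and the hypothesis $n\geq\frac{2}{3}|\ell-1|$ does nothing to repair this. Your fallback for $g=1$ --- a prime divisor of $\binom{r+n}{r}$ exceeding $r$ --- is also false: $\binom{10}{2}=45=3^2\cdot 5$ has no prime divisor greater than $r=8$; Sylvester--Schur/Hanson applied with the \emph{smaller} index only guarantees a prime divisor $p>\frac{3}{2}n$, never $p>r$. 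And the ``$p$-adic descent'' you invoke for $r>\frac{3}{2}n$ is not an argument. The paper's route avoids all of this by changing the test: Hanson's theorem gives a prime $p>\frac{3}{2}n$ with $p\mid\binom{r+n}{n}$, hence $p\mid r+d$ for exactly one $d\in\set{1,\dots,n}$ (note $p$ need not equal $r+d$ nor exceed $r$), and one then works with the representation $\MS_{(r,n)}(\ell)=\sum_{k=0}^{n}\binom{n}{k}(\ell-1)^{n-k}\frac{r}{r+k}$ from Corollary~\ref{cor:basic}~(i), in which every summand is $p$-integral except $k=d$, whose valuation is negative as soon as $p\nmid\ell-1$; each of the three alternative hypotheses ($n\geq\frac{2}{3}|\ell-1|$, $g=1$, $\MP\neq\emptyset$) serves exactly to secure such a $p$ with $p\nmid\ell-1$. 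Without switching from Proposition~\ref{prop:congr-2} to this partial-fraction form (or something equivalent), your Case (ii) cannot be completed.
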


The exceptions $(r,n,\ell) \in \set{(2, 4, -4),(2, 20, -5)}$ of
Table~\ref{tbl:except-1} show that the conditions of Theorem~\ref{thm:range}~(i),
namely, $n > r \geq \frac{1}{5} n$ and $n \geq |\ell-1|$, are essentially needed.
Regarding part~(ii), the exceptions of Table~\ref{tbl:except-2} imply that the
condition ${r > n}$ generally requires an additional condition on $\ell$.
The special case ${r > n \geq 1}$ for $\ell = 2$ was proved by
L\'opez-Aguayo and Luca~\cite{Lopez-Aguayo&Luca:2016}
for the sum~\eqref{eq:S-2}. For small values $\ell \in \LS$, where
\[
  \LS = \set{-3,-2,-1,2,3,4,5},
\]
we finally achieve the following result with simpler conditions.

\begin{corollary} \label{cor:range}
Let $n, r \geq 1$ and $\ell \in \LS$.
If $r \geq n$ or $n > r \geq \frac{1}{5} n$, then $\MS_{(r,n)}(\ell) \notin \ZZ$.
\end{corollary}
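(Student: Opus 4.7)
The plan is to tile the claimed region $\set{(r,n):r\ge n}\cup\set{(r,n):n>r\ge n/5}$ with sub-regions and apply on each of them one of the preceding results. Throughout, $\ell\in\LS$ gives $|\ell-1|\le 4$. First I dispose of two easy strips: if $r=1$ or $n=1$, then Theorem~\ref{thm:main}(i) yields $\MS_{(r,n)}(\ell)\notin\ZZ$, and if $r=n$ then Theorem~\ref{thm:main}(ii) (equivalently, Theorem~\ref{thm:diag}) does. From here on I may assume $r,n\ge 2$ with $r\neq n$.

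For the bulk of the remaining region I invoke Theorem~\ref{thm:range}. When $r>n$ and $n\ge 3$, the bound $n\ge 3\ge 8/3\ge \tfrac{2}{3}|\ell-1|$ holds, so part~(ii) applies. When $n>r\ge n/5$ and $n\ge 4$, the bound $n\ge 4\ge|\ell-1|$ holds, so part~(i) applies.

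Two low-$n$ configurations are left. If $n>r\ge n/5$ with $n=3$, then $r\in\set{1,2}$; combined with the running assumption $r\ge 2$ this forces $r=2$, whence $r+n=5$ is prime and Theorem~\ref{thm:main}(iii) applies. If $r>n$ with $n=2$ (so $r\ge 3$), the hypothesis of Theorem~\ref{thm:range}(ii) can genuinely fail for $\ell\in\set{-3,5}$ at isolated $r$; I would therefore close this case by the direct computation
\[
  \MS_{(r,2)}(\ell) \;=\; \frac{\ell^2(r+1)(r+2)-2\ell(r+2)+2}{(r+1)(r+2)},
\]
whose numerator is $\equiv 2 \pmod{r+2}$ and so forces $(r+2)\mid 2$ upon integrality, impossible for $r\ge 3$. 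The only obstacle is the bookkeeping needed to confirm that these tiles exhaust the claimed region, together with this short computation at $n=2$ that bridges precisely the narrow gap where $n\ge \tfrac{2}{3}|\ell-1|$ fails.
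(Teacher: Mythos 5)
Your proof is correct, and its skeleton matches the paper's: dispose of $r=1$, $n=1$, $r=n$ via Theorem~\ref{thm:main}, then exploit $|\ell-1|\le 4$ for $\ell\in\LS$ to feed the remaining region into Theorem~\ref{thm:range}. The difference lies in how the low-$n$ boundary is treated. The paper does not check the stated hypotheses of Theorem~\ref{thm:range} at all; instead it re-enters that theorem's \emph{proof} and observes that in both cases the prime $p$ produced there satisfies $p\ge 5>|\ell-1|$, so the obstruction $p\mid \ell-1$ never arises — this is how it absorbs the configurations (such as $n=2,3$ with $|\ell-1|=4$) where the hypotheses $n\ge|\ell-1|$ or $n\ge\frac23|\ell-1|$ fail. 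You instead use Theorem~\ref{thm:range} strictly as a black box and patch the two leftover slivers by hand: $n=3$, $r=2$ via Theorem~\ref{thm:main}(iii) ($r+n=5$ a prime power), and $r>n=2$ via the explicit formula for $\MS_{(r,2)}(\ell)$, whose numerator is $\equiv 2\pmod{r+2}$. Both patches check out (your $n=2$ computation in fact gives $\MS_{(r,2)}(\ell)\notin\ZZ$ for \emph{all} $\ell\in\ZZ$ and $r\ge 1$), and your observation that Theorem~\ref{thm:range}(ii) can genuinely fail as stated for $n=2$, $\ell\in\set{-3,5}$ is accurate — e.g.\ $(r,n)=(7,2)$ has $\binom{9}{2}=36$ with no prime factor $>3$ and $\gcd(36,4)\ne 1$. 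Your version is somewhat longer but more self-contained and auditable, since it never relies on re-opening the proof of Theorem~\ref{thm:range}; the paper's version is terser but leans on the reader tracking which prime is being invoked inside that earlier proof.
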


If one could remove the above restriction $r \geq \frac{1}{5} n$, then this would
prove the existing conjecture of the nonintegrality of $\MS_{(r,n)}(\ell)$ for
$\ell = 2$ as well as for other small values of $\ell$. We may raise the extended
conjecture as follows.

\begin{conjecture}
If $n, r \geq 1$ and $\ell \in \LS$, then $\MS_{(r,n)}(\ell) \notin \ZZ$.
\end{conjecture}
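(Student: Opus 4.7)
The conjecture strengthens Corollary~\pref{cor:range} by removing the hypothesis $r \geq n/5$ when $n > r$. Thus it suffices to prove $\MS_{(r,n)}(\ell) \notin \ZZ$ in the uncovered range $1 \leq r < n/5$ for every $\ell \in \LS$. My plan is to exploit Proposition~\pref{prop:congr-2}: it suffices to exhibit an index $d \in \set{1,\ldots,r}$ and a prime $p$ with $p \mid n+d$, $p > r$ (so that $p \nmid r!$), $p \nmid \ell-1$, and $\ell^{n+d} \not\equiv (\ell-1)^{n+d} \pmod{p}$. The condition $p > r$ guarantees that the $r!$ factor does not absorb the failure, so the entire difficulty is pushed onto finding a suitable prime in (or dividing) the window $(n, n+r]$.

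I would split the regime $n > 5r$ according to the size of $r$ relative to $n$. In the larger sub-range, say $r \geq n^{0.525}$, a Baker--Harman--Pintz type result on primes in short intervals (of the kind recalled in Section~\pref{sec:prelim}) guarantees a prime $p \in (n, n+r]$. Taking $d = p-n$ gives $p > n > 5r$, so automatically $p > r$ and $p \nmid \ell(\ell-1)$ for $n$ sufficiently large and every $\ell \in \LS$. Since $\ell$ and $\ell-1$ are coprime, the ratio $\ell/(\ell-1)$ has a well-defined order $\delta = \ord_p(\ell/(\ell-1))$ modulo $p$, and the congruence $\ell^{n+d} \equiv (\ell-1)^{n+d} \pmod{p}$ forces $\delta \mid n+d = p$, an impossibility since $p$ is prime and $\delta < p$; this immediately verifies the hypothesis of Proposition~\pref{prop:congr-2}.

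For $r$ smaller than $n^{0.525}$, no prime is guaranteed to lie in $(n, n+r]$, so I would relax the search: instead of a prime equal to $n+d$, look for any prime divisor $p > r$ of some element of $\set{n+1,\ldots,n+r}$. This is equivalent to asking that $\set{n+1,\ldots,n+r}$ not be entirely $r$-smooth, a statement that follows from standard sieve bounds once $n$ is large enough relative to $r$. The finitely many residual $(r,n)$ would be dispatched by direct computer verification, in the same spirit as the checks up to $r \leq 22$ already carried out in the literature for $\ell = 2$.

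The main obstacle is the case $\ell = 2$, where the non-integrality of the sum~\eqref{eq:S-2} is itself the central open problem of the topic. Here $\ell - 1 = 1$, so the congruence of Proposition~\pref{prop:congr-2} collapses to $2^{n+d} \equiv 1 \pmod{p}$, which genuinely constrains $n+d$ to an arithmetic progression modulo $\ord_p(2)$, and $\ord_p(2)$ can be exceptionally small (as small as $O(\log p)$). Closing this case likely requires either exploiting several admissible primes simultaneously, or switching to Proposition~\pref{prop:congr-1} and carrying out a finer $p$-adic analysis of the partial sum $\sum_{k=0}^{n-1} \binom{r+n}{k}(-2)^k$ modulo the full power $\binom{r+n}{r}$. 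I expect this step to remain the principal barrier, which would also explain why, despite the many partial results cited after~\eqref{eq:S-2}, the conjecture is still open.
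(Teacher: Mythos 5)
This statement is labeled a \emph{conjecture} in the paper precisely because no proof is known; the paper itself only establishes the weaker Corollary~\ref{cor:range} (requiring $r \geq n$ or $n > r \geq \frac{1}{5}n$), and you end your proposal by conceding that the key case $\ell = 2$ ``is still open.'' So what you have written is a research programme, not a proof, and it contains genuine gaps beyond the one you acknowledge. First, the short-interval step: a Baker--Harman--Pintz type bound guarantees a prime in $(n, n+r]$ only when $r \gg n^{0.525}$, so the entire range $1 \leq r \ll n^{0.525}$ (which includes every fixed $r$) is left uncovered by the clean Fermat argument $p = n+d$. Second, your fallback for small $r$ does not close the gap: by Sylvester--Schur one always has a prime $p > r$ dividing some $n+d$ with $1 \leq d \leq r$, but when $p$ properly divides $n+d$ (rather than $p = n+d$) Fermat's little theorem no longer forces $\ell^{n+d} \not\equiv (\ell-1)^{n+d} \pmod{p}$, because $n+d \bmod (p-1)$ is uncontrolled; the hypothesis of Proposition~\ref{prop:congr-2} can genuinely fail at that index, and the set of residual pairs $(r,n)$ is infinite (all $n$ for each fixed small $r$), so ``direct computer verification'' cannot dispatch it. Third, the exceptions in Table~\ref{tbl:except-1}, e.g.\ $(r,n,\ell) = (2,4,-4)$ and $(2,20,-5)$, show that integral values do occur for $\ell$ just outside $\LS$ in exactly the regime $r < \frac{1}{5}n$ you are trying to cover, so any proof must use the specific smallness of $\ell \in \LS$ in an essential way that your sketch does not supply. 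The correct assessment is that the statement remains open; your outline reproduces the strategy behind Theorem~\ref{thm:range}~(i) but does not extend it past the barrier the paper already identifies.
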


We conclude with the following question.

\begin{question}
For which numbers $\ell \in \ZZ \setminus \set{0,1}$ does
$\MS_{(r,n)}(\ell)$ take only nonintegral values for all $n, r \geq 1$?
\end{question}

%%%%%%%%%%%%%%%%%%%%%%%%%%%%%%%%%%%%%%%%%%%%%%%%%%%%%%%%%%%%%%%%%%%%%%%%%%%%%%%%
% Section
%%%%%%%%%%%%%%%%%%%%%%%%%%%%%%%%%%%%%%%%%%%%%%%%%%%%%%%%%%%%%%%%%%%%%%%%%%%%%%%%

\section{Preliminaries}
\label{sec:prelim}

Let $\ZZ_p$ be the ring of $p$-adic integers and $\QQ_p$ be the field of $p$-adic
numbers. Extend $\ord_p(s)$ as the $p$-adic valuation of $s \in \QQ_p$.
Let $\FF_p$ be the finite field with $p$ elements.

Applying Legendre's formula \cite[pp. 8--10]{Legendre:1808}
\[
  \ord_p( n! ) = \frac{n - s_p(n)}{p-1}
\]
to binomial coefficients provides that
\begin{equation} \label{eq:binom-2}
  \ord_p \left( \binom{n}{k} \right) = \frac{s_p(k) + s_p(n-k) - s_p(n)}{p-1}.
\end{equation}

\begin{lemma}[\hspace{1sp}{\cite[Sec.~5.1, p.~37]{Robert:2000}}] \label{lem:val-min}
If $n \geq 1$, then
\[
  \ord_p \mleft( \sum_{\nu = 0}^n x_\nu \mright) \geq
    \min_{0 \leq \nu \leq n} \ord_p( x_\nu ) \quad (x_\nu \in \QQ_p),
\]
where equality holds, if there exists an index $m$ such that
$\ord_p( x_m ) < \ord_p( x_\nu )$ for all $\nu \neq m$.
\end{lemma}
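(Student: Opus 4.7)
The statement is the standard ultrametric (strong triangle) inequality for the $p$-adic valuation, together with the familiar criterion for when equality holds, so the plan is to reduce to the two-term case and then induct. First I would establish the two-term version: given $x, y \in \QQ_p$ with, say, $a = \ord_p(x) \leq b = \ord_p(y)$, write $x = p^a u$ and $y = p^b v$ with $u, v \in \ZZ_p^\times$ (after absorbing any extra factors of $p$ into the exponent, which only happens in the case $a=b$). Then $x + y = p^a(u + p^{b-a} v)$, so $u + p^{b-a} v \in \ZZ_p$ and hence $\ord_p(x+y) \geq a = \min(a,b)$; moreover, if $a < b$ then $u + p^{b-a} v \equiv u \not\equiv 0 \pmod p$, giving equality $\ord_p(x+y) = a$.

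With the two-term inequality in hand, the general inequality $\ord_p\bigl(\sum_{\nu=0}^n x_\nu\bigr) \geq \min_\nu \ord_p(x_\nu)$ follows by a straightforward induction on $n$, applying the two-term bound at each step. Alternatively, and more conceptually, one may set $m = \min_\nu \ord_p(x_\nu)$, observe that each $p^{-m} x_\nu$ lies in $\ZZ_p$, and conclude that $p^{-m}\sum_\nu x_\nu = \sum_\nu p^{-m} x_\nu \in \ZZ_p$, which is exactly the desired inequality.

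For the equality clause, suppose there is a unique index $m$ with $\ord_p(x_m) < \ord_p(x_\nu)$ for every $\nu \neq m$, and set $a = \ord_p(x_m)$. Then $p^{-a} x_m$ is a $p$-adic unit while $p^{-a} x_\nu \in p\ZZ_p$ for all $\nu \neq m$. Summing yields
\[
  p^{-a} \sum_{\nu=0}^n x_\nu \equiv p^{-a} x_m \not\equiv 0 \pmod p,
\]
so $\ord_p\bigl(\sum_\nu x_\nu\bigr) = a = \ord_p(x_m)$, as claimed.

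There is really no hard step here: the only subtlety is keeping track of units versus non-units modulo $p$ in the equality case, which the reduction to $p^{-a} x_\nu$ handles cleanly. Since the result is classical and cited to Robert's book, the main role of this lemma in the paper is as a workhorse for the $p$-adic arguments in Sections~\ref{sec:proofs-1} and~\ref{sec:proofs-2}, in particular to extract the exact $p$-adic valuation of sums such as $\MS_{(r,n)}(\ell)$ by isolating a single dominant term.
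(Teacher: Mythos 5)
Your proof is correct: the two-term ultrametric bound, the reduction $p^{-m}x_\nu \in \ZZ_p$ for the general inequality, and the unit-plus-$p\ZZ_p$ argument for the equality clause are all sound (the only cosmetic quibble is that the remark about ``absorbing extra factors of $p$'' is unnecessary, since $x = p^{\ord_p(x)}u$ with $u \in \ZZ_p^\times$ holds by definition for $x \neq 0$, and $\ord_p(0)=\infty$ handles the zero case). The paper itself gives no proof of this lemma --- it is quoted verbatim from Robert's book --- so there is nothing to compare against; your argument is the standard one and correctly supplies what the citation leaves out.
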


\begin{lemma} \label{lem:binom-p}
If $n = p^e$ with $p$ a prime and $e \geq 1$, then
\[
  \ord_p \mleft( \binom{n}{k} \mright) \geq 1 \quad (0 < k < n).
\]
\end{lemma}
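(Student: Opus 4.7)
The plan is to invoke the digit-sum formula \eqref{eq:binom-2} directly. Since $n = p^e$ has a single nonzero base-$p$ digit, $s_p(n) = 1$, so the claim $\ord_p\binom{n}{k} \geq 1$ reduces to the elementary inequality
\[
  s_p(k) + s_p(p^e - k) \geq p \quad (0 < k < p^e).
\]

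To establish this inequality I would expand $k$ in base $p$, writing $k = \sum_{i=0}^{e-1} a_i p^i$ with $a_i \in \set{0, \ldots, p-1}$, and let $j \in \set{0, \ldots, e-1}$ be the index of the lowest nonzero digit, so that $a_j \geq 1$. Performing the subtraction $p^e - k$ with borrows from position $j$ upward gives the digit $p - a_j$ at position $j$ and the digit $p - 1 - a_i$ at each position $i$ with $j < i < e$, while all lower positions vanish. Summing these digits yields
\[
  s_p(p^e - k) = (p - a_j) + (e - 1 - j)(p-1) - \sum_{i=j+1}^{e-1} a_i,
\]
and combining with $s_p(k) = a_j + \sum_{i=j+1}^{e-1} a_i$ the unknown digits cancel, leaving $s_p(k) + s_p(p^e - k) = p + (e - 1 - j)(p-1)$.

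Substituting into \eqref{eq:binom-2}, one obtains the exact formula
\[
  \ord_p \mleft( \binom{p^e}{k} \mright)
  = \frac{p + (e-1-j)(p-1) - 1}{p-1} = e - j,
\]
which is at least $1$ since $j \leq e-1$. There is no real obstacle here, as the computation is a direct digit-by-digit unpacking of Legendre's formula (equivalently, it could be read off Kummer's theorem by observing that at least one carry must occur when $k$ and $p^e - k$ are added in base $p$ to produce $p^e$). The only point that requires mild care is the bookkeeping of the borrows in the subtraction, which must begin precisely at the position $j$ of the lowest nonzero digit of $k$.
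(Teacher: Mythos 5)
Your proof is correct, and it even establishes the sharper statement $\ord_p\binom{p^e}{k} = e - \ord_p(k)$; the digit/borrow bookkeeping is right, and $j \leq e-1$ indeed forces $e-j \geq 1$. However, it takes a genuinely different route from the paper. The paper's proof is a one-line algebraic argument: iterating the Frobenius endomorphism in $\FF_p$ gives $(x+y)^{p^e} = x^{p^e} + y^{p^e}$, so every intermediate coefficient $\binom{p^e}{k}$ with $0<k<p^e$ must vanish modulo $p$. That argument buys brevity and avoids any computation, but yields only the divisibility $p \mid \binom{p^e}{k}$, which is all the lemma asserts and all that is used later (e.g.\ in Lemma~\ref{lem:denom-2} and Theorem~\ref{thm:main}~(iii)). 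Your approach via Legendre's formula \eqref{eq:binom-2} costs more care (the borrow propagation starting at the lowest nonzero digit) but returns the exact $p$-adic valuation rather than a lower bound of $1$; as you note, the same exact value drops out of Kummer's carry-counting theorem. Both are valid; the paper simply opts for the minimal tool needed.
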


\begin{proof}
This follows from applying the Frobenius endomorphism in $\FF_p$
iteratively such that
\[
  (x+y)^{p^\nu} = x^{p^\nu} + y^{p^\nu} \quad (\nu \geq 1). \qedhere
\]
\end{proof}

\begin{theorem}[Singmaster \cite{Singmaster:1974}] \label{thm:Singmaster}
Let $d, m \geq 1$. Define the sets
\[
  B_m(d) = \set{ (j,k) \in \ZZ^2 : j,k \geq 0, \; 0 \leq j+k < m, \,\text{and}\;\;
    d \mid \binom{j+k}{k} }.
\]
Then we have the density
\[
  \lim_{m \to \infty} \# B_m(d) \Big/ \binom{m+1}{2} = 1,
\]
which implies that almost all binomial coefficients are divisible by $d$.
\end{theorem}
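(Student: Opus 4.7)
The plan is to bound the complement: writing $\overline{B}_m(d)$ for the pairs $(j,k)$ with $j+k<m$ and $d \nmid \binom{j+k}{k}$, it suffices to prove $|\overline{B}_m(d)| = o(m^2)$, since $\binom{m+1}{2} \sim m^2/2$. The argument splits into a reduction to prime powers followed by a carry-count estimate on rows of Pascal's triangle in base $p$.

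For the reduction, factor $d = p_1^{e_1} \cdots p_s^{e_s}$. Then $d \nmid \binom{n}{k}$ forces $\ord_{p_i}\binom{n}{k} < e_i$ for some $i$, so by a union bound
\[
  |\overline{B}_m(d)| \leq \sum_{i=1}^s \#\{(j,k) : j+k<m,\ \ord_{p_i}\binom{j+k}{k} < e_i\}.
\]
Since $s$ is a constant, it is enough to prove the density conclusion separately for a single prime power $p^e$.

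Now fix $p$ and $e \geq 1$, and set $N = \lceil \log_p m \rceil$, so every row index $n = j+k < m$ has at most $N$ base-$p$ digits. By Kummer's theorem (equivalent to \eqref{eq:binom-2}), $\ord_p \binom{n}{k}$ counts the carries $c_p(n,k)$ in the base-$p$ addition $k+(n-k)=n$. I would therefore bound
\[
  C_N(p,e) = \#\{(n,k) : 0 \leq k \leq n < p^N,\ c_p(n,k) \leq e-1\}.
\]
For $e = 1$ this is exactly Lucas's theorem: no carries means the digits of $k$ are componentwise dominated by those of $n$, so each position contributes $\sum_{a=0}^{p-1}(a+1) = \binom{p+1}{2}$ digit pairs and $C_N(p,1) = \binom{p+1}{2}^N$. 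For general $e$, I would select the $s \leq e-1$ positions at which a carry occurs, bound the local digit configurations by $p^2$ there and by $\binom{p+1}{2}$ at the remaining positions, and sum to obtain
\[
  C_N(p,e) \leq \sum_{s=0}^{e-1} \binom{N}{s}\, p^{2s}\, \binom{p+1}{2}^{N-s} = O_{p,e}\!\left(N^{e-1} \binom{p+1}{2}^N\right).
\]
Because $\binom{p+1}{2}/p^2 = (p+1)/(2p) < 1$, this is $o(p^{2N}) = o(m^2)$, the polynomial factor $N^{e-1} = O((\log m)^{e-1})$ being absorbed by the exponential decay.

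The main obstacle is the combinatorial book-keeping behind the last display, since carries propagate from one digit position to the next and the local counts are not truly independent. The cleanest way around this is a transfer-matrix argument in the single-bit carry state $\epsilon \in \{0,1\}$: at each digit position one writes a $2 \times 2$ matrix whose entries count the digit pairs producing the given carry transition, and any product of $N$ such matrices containing at most $e-1$ factors of the carry-producing matrix yields the stated bound. Once that lemma is in place, translating the count from $(n,k)$ with $k \leq n$ to $(j,k)$ via $j = n-k$ and summing over the finitely many prime powers dividing $d$ closes the argument.
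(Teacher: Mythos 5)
This is an attributed external result (Singmaster, 1974); the paper cites it without proof, so there is no internal argument to compare against. Your proof is correct and self-contained, and in fact follows essentially the classical route: reduce to prime powers by a union bound on the complement, then use Kummer's theorem (the carry count form of \eqref{eq:binom-2}) to bound the number of entries in the first $p^N$ rows with fewer than $e$ carries. The one genuine issue you flag --- that carries propagate, so the per-digit counts are not independent --- is resolved exactly along the lines you propose, and more simply than a full transfer-matrix computation: condition on the set $S$ of digit positions that \emph{generate} a carry. Once $S$ is fixed, the carry into position $i$ is determined (it equals $1$ precisely when $i-1\in S$), so the digit constraints at distinct positions decouple: a position in $S$ admits at most $p^2$ digit pairs $(n_i,k_i)$, a position outside $S$ with carry-in $0$ admits exactly $\binom{p+1}{2}$ pairs ($k_i\le n_i$), and one with carry-in $1$ admits $\binom{p}{2}\le\binom{p+1}{2}$ pairs ($k_i<n_i$). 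Summing over the choices of $S$ with $|S|=s\le e-1$ yields precisely your displayed bound
\[
  C_N(p,e)\ \le\ \sum_{s=0}^{e-1}\binom{N}{s}\,p^{2s}\binom{p+1}{2}^{N-s}
  \ =\ O\!\left(N^{e-1}\left(\tfrac{p+1}{2p}\right)^{N} p^{2N}\right),
\]
and since $p^{2N}=O(m^2)$ while $N^{e-1}\bigl(\tfrac{p+1}{2p}\bigr)^{N}\to 0$, the complement is $o(m^2)$; dividing by $\binom{m+1}{2}\sim m^2/2$ gives the density $1$. The translation $j=n-k$ and the union over the finitely many prime powers dividing $d$ are routine, so the argument is complete.
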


\begin{theorem}[Faulkner \cite{Faulkner:1966}] \label{thm:Faulkner}
If $n \geq 2k \geq 2$, then $\binom{n}{k}$ has a prime divisor
$p \geq \frac{7}{5} k$, where the factor $\frac{7}{5}$ is best possible.
\end{theorem}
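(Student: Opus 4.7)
The plan is to proceed by contradiction: assume every prime divisor $p$ of $\binom{n}{k}$ satisfies $p < \frac{7}{5}k$, and exhibit a prime violating this. Two elementary consequences of Legendre's formula \eqref{eq:binom-2} suffice throughout. First, for every prime $p$, $p^{\alpha_p} \leq n$ where $\alpha_p = \ord_p \binom{n}{k}$. Second, every prime $p$ with $n-k < p \leq n$ appears once in $n(n-1)\cdots(n-k+1)$ and not at all in $k!$, hence divides $\binom{n}{k}$. It therefore suffices to produce a prime in the interval $\bigl(\max\{n-k,\frac{7}{5}k\},\,n\bigr]$.

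The analytic input is Nagura's refinement of Bertrand's postulate: every interval $[x, \frac{6}{5}x]$ with $x \geq 25$ contains a prime. I would split on the ratio $n/k$. When $2k \leq n \leq 6k$, apply Nagura with $x = \max\{n-k, \frac{7}{5}k\}$; a short computation shows $\frac{6}{5}x \leq n$ throughout this range, so the Nagura prime lies in the desired interval. The critical sub-range is $2k \leq n < \frac{12}{5}k$, where one takes $x = \frac{7}{5}k$ and uses $\frac{42}{25}k < 2k \leq n$; the choice of constant $\frac{7}{5}$ is exactly what matches Nagura's $\frac{6}{5}$ bound.

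For $n > 6k$, Nagura no longer directly yields a prime in $(n-k, n]$. Here I would combine the bound $p^{\alpha_p} \leq n$ with the contradictory hypothesis to get $\binom{n}{k} \leq n^{\pi(7k/5)}$, while the trivial lower bound $\binom{n}{k} \geq ((n-k+1)/k)^k \geq 5^k$ together with effective Rosser--Schoenfeld estimates for $\pi$ forces $\pi(7k/5) < k$ once $k$ exceeds a small threshold, contradicting the upper bound for $n$ sufficiently large. The remaining small cases $k \leq k_0$ are handled by explicit computation over all relevant $n \geq 2k$ up to an explicit bound. Sharpness of the constant is verified by exhibiting an extremal configuration, as given in Faulkner's original note.

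The main obstacle is coordinating these analytic estimates so that a single prime in the target interval is produced uniformly across the entire range $n \geq 2k$. Nagura's threshold $\frac{6}{5}$ is calibrated exactly to the constant $\frac{7}{5}$; any weaker quantitative form of Bertrand's postulate would fail in the tight sub-range $n \sim 2k$, and the combinatorics of $\binom{n}{k}$ alone cannot compensate for the analytic shortfall. Handling the transitional region near $n = 6k$, where Nagura's and the counting-based arguments must meet, requires the most delicate bookkeeping.
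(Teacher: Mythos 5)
First, a point of comparison: the paper does not prove this statement at all. Theorem~\ref{thm:Faulkner} is quoted from Faulkner's 1966 paper and used as a black box (only in Lemma~\ref{lem:binom-recip}), so there is no in-paper proof to measure your sketch against. Judged on its own terms, your argument is sound in the middle range but has a genuine gap for large $n/k$. The part $2k \le n \le 6k$ works: any prime $p$ with $\max\{n-k,\tfrac75 k\} < p \le n$ divides the numerator $(n)_k$ and not $k!$ (since $p > n-k \ge k$), and Nagura's theorem applied at $x=\max\{n-k,\tfrac75 k\}$ produces such a prime because $\tfrac65(n-k)\le n$ exactly when $n\le 6k$, while $\tfrac{42}{25}k<2k\le n$ handles the sub-range $n<\tfrac{12}{5}k$; the configurations with $x<25$ form a small finite set.

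The gap is the regime $n>6k$. The chain ``$\binom{n}{k}\ge((n-k+1)/k)^k\ge 5^k$, hence $\pi(7k/5)<k$, contradicting the upper bound for $n$ sufficiently large'' is not a valid contradiction: under your hypothesis the upper bound is $\binom{n}{k}\le n^{\pi(7k/5)}$, which grows without bound in $n$, whereas $5^k$ does not, so the two inequalities are compatible for every large $n$ (and $\pi(7k/5)<k$ is true for all $k\ge 4$ anyway, so it contradicts nothing). You must retain the $n$-dependence and compare $(n/k)^k$ with $n^{\pi(7k/5)}$, which gives a contradiction only when $n>k^{k/(k-\pi(7k/5))}$; with effective bounds on $\pi$ this threshold lies well above $6k$ unless $k$ is enormous (numerically the crossover at $n=6k$ occurs only for $k$ on the order of $10^5$), leaving an intractably large intermediate region $6k<n<k^{k/(k-\pi(7k/5))}$ uncovered. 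Even the standard Erd\H{o}s refinement --- splitting the primes at $\sqrt n$ and bounding the squarefree part by $\prod_{p<7k/5}p<4^{7k/5}$ --- fails near $n=6k$, since $4^{7k/5}>6^k$. Closing this region is precisely where Faulkner's paper does its real work, and your sketch does not supply the missing estimates. (Two minor points: the sharpness of $\tfrac75$ comes from the single example $\binom{10}{5}=2^2\cdot3^2\cdot7$, not from any calibration against Nagura's constant $\tfrac65$; and since the paper only cites the result, no proof is actually required here.)
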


\begin{theorem}[Hanson \cite{Hanson:1973}] \label{thm:Hanson}
If $n \geq 2k \geq 2$, then $\binom{n}{k}$ has a prime divisor $p > \frac{3}{2} k$,
except for the cases $(n,k) \in \set{(4,2), (9,2), (10,5)}$.
\end{theorem}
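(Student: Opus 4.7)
The plan is to proceed by contradiction following Hanson's original strategy: assume that $(n,k)$ with $n \geq 2k \geq 2$ is not one of the three listed pairs and that every prime dividing $\binom{n}{k}$ satisfies $p \leq \tfrac{3}{2}k$. I would then derive an upper bound for $\binom{n}{k}$ from this hypothesis, compare it against a standard lower bound to obtain a contradiction for all $(n,k)$ outside an explicit finite range, and finally handle the residual range by a direct computation that pins down exactly the three exceptions.

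For the upper bound, I would invoke Kummer's theorem --- equivalent here to the formula \eqref{eq:binom-2} --- to conclude that $p^{\ord_p\binom{n}{k}} \leq n$ for every prime $p$. Splitting the prime factors into those with $p \leq \sqrt{n}$ and those with $\sqrt{n} < p \leq \tfrac{3}{2}k$ (for the latter, $\ord_p\binom{n}{k} \leq 1$ since two carries in base $p$ would force $n \geq p^{2}$), this yields an estimate of the shape
\[
  \binom{n}{k} \;\leq\; n^{\pi(\sqrt{n})} \prod_{p \leq 3k/2} p.
\]
The primorial is then controlled by a Chebyshev-type bound $\prod_{p \leq x} p < c^{\,x}$ (Hanson employs a refined form such as $3^{\,x-1}$). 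Matching this against the lower bound $\binom{n}{k} \geq \binom{2k}{k} \geq 4^{k}/(2k+1)$, or the sharper $\binom{n}{k} \geq 2^{n}/(n+1)$ when $k$ is close to $n/2$, forces both $n$ and $k$ into a bounded range.

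The hardest part is calibrating these numerical constants sharply enough that the leftover finite range is small enough to enumerate, because the three exceptional pairs lie essentially on the boundary where the asymptotic inequality just fails. I would split the argument according to the size of $k$ relative to $n$: one regime where $k$ is close to $n/2$, using the sharper middle-binomial bound; and another for $k$ much smaller than $n/2$, where a Sylvester--Schur style analysis of the product $n(n-1)\cdots(n-k+1)$ gives additional leverage via the primes falling into $(k, n-k+1]$. Supplementing this with auxiliary estimates on the distribution of primes in intervals $(k,\tfrac{3}{2}k]$ should reduce matters to $n$ bounded by a few dozen, after which a finite computation confirms that precisely $(4,2)$, $(9,2)$, and $(10,5)$ violate the conclusion. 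The claim that $\tfrac{3}{2}$ is best possible would then follow by exhibiting an infinite family of pairs $(n,k)$ whose largest prime divisor of $\binom{n}{k}$ only marginally exceeds $\tfrac{3}{2}k$, preventing any improvement of the constant.
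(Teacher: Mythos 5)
This statement is not proved in the paper at all: it is quoted verbatim from Hanson's 1973 article and used as a black box (alongside Faulkner's theorem) in the proofs of Lemma~\ref{lem:binom-recip} and Theorem~\ref{thm:range}. So there is no internal argument to compare yours against; the only question is whether your proposal would stand on its own.

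Your outline does follow the correct classical strategy (Erd\H{o}s's treatment of Sylvester--Schur, refined by Hanson): the contradiction hypothesis, the bound $p^{\ord_p\binom{n}{k}}\le n$ from Kummer/Legendre, the split of primes at $\sqrt{n}$ giving $\binom{n}{k}\le n^{\pi(\sqrt n)}\prod_{p\le 3k/2}p$, a Chebyshev-type primorial bound, and comparison with $\binom{n}{k}\ge\binom{2k}{k}$. All of these individual observations are sound. The problem is that what you have written is a plan, not a proof: every quantitatively decisive step is phrased as ``I would derive'', ``should reduce matters to'', and the calibration you yourself identify as the hardest part --- obtaining constants sharp enough that the residual range is finite and small, and then actually enumerating it to isolate exactly $(4,2)$, $(9,2)$, $(10,5)$ --- is precisely the entire content of Hanson's paper. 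With the crude bounds you quote ($\prod_{p\le x}p<c^{x}$ with $c=3$, and $4^{k}/(2k+1)$ below), the inequality $4^{k}/(2k+1)\le n^{\pi(\sqrt n)}\,3^{3k/2}$ does \emph{not} produce a contradiction for large $k$, since $3^{3/2}>4$; one genuinely needs the sharper primorial estimates and the more delicate case analysis on $k$ versus $n$ that Hanson develops, and none of that is present. Finally, your closing sentence about $\tfrac{3}{2}$ being ``best possible'' addresses a claim that is not in this statement --- that optimality assertion belongs to Faulkner's constant $\tfrac{7}{5}$ in Theorem~\ref{thm:Faulkner}; Hanson's theorem instead carries the three explicit exceptions. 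As it stands, the proposal is a correct roadmap with the destination still out of reach; for the purposes of this paper the honest course is to cite Hanson, as the author does.
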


Theorems~\ref{thm:Faulkner} and \ref{thm:Hanson} are stronger versions of
a theorem of Sylvester~\cite{Sylvester:1892}, independently discovered by
Schur~\cite{Schur:1929}, which states that if $n \geq 2k \geq 2$ then
$\binom{n}{k}$ has a prime divisor $p > k$.
A simple proof was given by Erd\H{o}s~\cite{Erdos:1934}.
Considering the special case $\binom{2n}{n}$ for $n \geq 2$ implies Bertrand's
postulate that there always exists a prime $p$ with $n < p < 2n$.
We need the following improvements.

\begin{theorem}[Nagura \cite{Nagura:1952}] \label{thm:Nagura}
If $n \geq 25$, then there exists a prime $p$ such that
$n < p < \frac{6}{5} n$.
\end{theorem}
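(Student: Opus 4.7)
The plan is to reduce the existence of a prime in the short interval $(n, \tfrac{6}{5} n)$ to sharp Chebyshev-type bounds on $\theta(x) = \sum_{p \leq x} \log p$. Concretely, I would establish inequalities of the form $b x < \theta(x) < a x$ for $x \geq x_0$ with constants satisfying $\tfrac{6}{5} b > a$. Then for $n \geq \max(x_0, 25)$,
$$\theta(\tfrac{6}{5} n) - \theta(n) > \bigl(\tfrac{6}{5} b - a\bigr) n > 0,$$
which exhibits at least one prime $p$ with $n < p \leq \tfrac{6}{5} n$. The strict upper bound $p < \tfrac{6}{5} n$ follows unless $5 \mid n$ and $p = \tfrac{6}{5} n$ exactly, a case ruled out since such a $p$ would be a multiple of $5$, hence composite for $n > 5$.

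To derive the Chebyshev estimates elementarily, I would adapt the Erd\H{o}s method. For the lower bound on $\theta(x)$, one starts from $\binom{2n}{n} \geq 4^n/(2n+1)$, expands via Legendre's formula, and applies the inequality $p^{\ord_p \binom{a+b}{a}} \leq a+b$, which follows from \eqref{eq:binom-2} because each floor-difference is $0$ or $1$ and only $k$ with $p^k \leq a+b$ contributes. For the upper bound, one uses the classical estimate $\prod_{p \leq x} p \leq 4^x$, proved by induction on $x$ via $\binom{2m+1}{m}$. To sharpen the constants, pass to the mixed binomial $N = \binom{6n/5}{n/5}$, whose logarithm satisfies $\log N \sim \tfrac{6}{5} H(\tfrac{1}{6}) \, n$ by Stirling (with $H$ the binary entropy), and bound its prime factorization by separating small primes $p \leq \sqrt{6n/5}$ (each contributing at most $\log(6n/5)$ to $\log N$) from medium primes (each contributing at most $\log p$). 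Combining these with the earlier crude inequalities and iterating in Nagura's bootstrap style pushes the ratio $a/b$ below $6/5$.

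Once the asymptotic inequality $\theta(\tfrac{6}{5} n) > \theta(n)$ is established for all $n \geq N_0$ with an explicit $N_0$, the remaining finite range $25 \leq n \leq N_0$ is dispatched by direct inspection: for each such $n$ one exhibits a prime in $(n, \tfrac{6}{5} n)$ from a pre-computed table of primes up to $\tfrac{6}{5} N_0$. The sharpness of the threshold $n = 25$ is witnessed by $n \in \{23, 24\}$, whose intervals $(23, 27.6)$ and $(24, 28.8)$ are both prime-free; at $n = 25$ the prime $29$ lies in $(25, 30)$.

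The main obstacle is the delicate margin $a/b < 6/5$: the crude Erd\H{o}s bounds $\theta(x) < (\log 4)\, x$ and $\theta(x) > \tfrac{\log 2}{2}\, x$ leave $a/b$ far above $1.2$, so several rounds of refinement are needed, together with careful handling of the prime-power correction $\psi(x) - \theta(x) = O(\sqrt{x} \log^2 x)$. Balancing the constants tightly enough to reach exactly the explicit threshold $n_0 = 25$ --- rather than a safely larger bound --- is what turns the argument from a soft asymptotic contradiction into Nagura's genuinely numerical computation.
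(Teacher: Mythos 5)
This statement is not proved in the paper at all: it is Nagura's 1952 theorem, imported verbatim with a citation to \cite{Nagura:1952} and used as a black box in the proof of Theorem~\ref{thm:range}. So there is no internal argument to compare yours against; the only fair comparison is with Nagura's original proof, whose strategy (explicit Chebyshev-type bounds on $\theta$ and $\psi$ obtained from a carefully chosen combination of factorials, followed by a finite verification) your outline does correctly identify. Your framing of the reduction --- find $b x < \theta(x) < a x$ with $a/b < 6/5$, conclude $\theta(\tfrac{6}{5}n) - \theta(n) > 0$, then check $25 \leq n \leq N_0$ by table --- is the standard and correct skeleton, and your sharpness check at $n \in \{23,24\}$ is accurate. (A small slip: when $5 \mid n$ the endpoint $\tfrac{6}{5}n = 6m$ is composite because it is a multiple of $6$, not of $5$; the conclusion stands.)

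The genuine gap is that the entire mathematical content of Nagura's theorem lies in the step you defer. As you yourself note, the elementary Erd\H{o}s bounds give $a = \log 4 \approx 1.386$ and $b = \tfrac{1}{2}\log 2 \approx 0.347$, so $a/b \approx 4$, hopelessly far from $6/5$; and the proposed remedies --- passing to $\binom{6n/5}{n/5}$, invoking the binary entropy asymptotics, and ``iterating in Nagura's bootstrap style'' --- are named but not executed, with no explicit constants, no explicit $N_0$, and no verification that the prime-power correction $\psi - \theta$ does not eat the margin. A proof of an explicit short-interval result is precisely the production of those numbers; until they are exhibited, the argument is a plan rather than a proof, and the finite check over $25 \leq n \leq N_0$ cannot even be set up. For the purposes of this paper the honest course is to do what the author does: cite Nagura (and Dusart for Theorem~\ref{thm:Dusart}) rather than reprove them.
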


\begin{theorem}[Dusart~\cite{Dusart:2018}] \label{thm:Dusart}
If $n \geq 89\,693$, then there exists a prime $p$ such that
$n < p < (1 + \log^{-3} n) \, n$.
\end{theorem}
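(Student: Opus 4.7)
Since this is an assertion about primes in a very short interval, the plan is the standard analytic-number-theoretic one: reduce the existence of a prime in $(n, m)$, where $m := (1 + \log^{-3} n)\, n$, to a positivity statement for the Chebyshev function $\theta(x) = \sum_{p \leq x} \log p$, and then invoke sharp explicit PNT-type estimates. If $\theta(m) - \theta(n) > 0$, then $(n, m]$ must contain at least one prime, so the task reduces to lower-bounding this difference.

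The technical core is an effective error estimate of the form
\[
  |\theta(x) - x| \leq \eta_k \, \frac{x}{\log^k x} \qquad (x \geq X_k)
\]
for some exponent $k \geq 4$ and explicit pair $(\eta_k, X_k)$. I would obtain such a bound by combining three ingredients: (a) an explicit zero-free region for $\zeta(s)$ to the left of the critical line (of Mossinghoff--Trudgian or Ford type); (b) a numerical verification of the Riemann hypothesis up to a large height $T_0$ (Platt's computation); and (c) a Perron-style contour integration that converts these analytic inputs into an explicit error bound for $\theta(x)$, sharpening the method of Rosser--Schoenfeld.

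Assuming such an estimate with $k > 3$, I would then write
\[
  \theta(m) - \theta(n)
    = (m - n) + (\theta(m) - m) - (\theta(n) - n)
    \geq \frac{n}{\log^3 n} - \eta_k \mleft( \frac{m}{\log^k m} + \frac{n}{\log^k n} \mright),
\]
and observe that the main term $n/\log^3 n$ dominates the error as soon as $n$ is large enough; optimizing the constants yields an effective threshold $n_0$ beyond which the short-interval prime existence is automatic. For the residual range $n < n_0$, I would finish with a direct computer check against a table of primes, which is trivial given modern prime data.

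The principal obstacle is entirely one of sharpness, not structure: pinning down the specific cutoff $89\,693$ requires threading the exponent $k$, the width of the admissible zero-free region, the RH-verification height $T_0$, and the constant $\eta_k$ against each other so as to minimize $X_k$. The analytic skeleton has been standard since Rosser and Schoenfeld, and the genuinely new contribution in \cite{Dusart:2018} is the delicate numerical bookkeeping that produces the stated threshold.
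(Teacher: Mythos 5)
This statement is not proved in the paper at all: it is imported verbatim from Dusart~\cite{Dusart:2018} as a black-box ingredient for Theorem~\ref{thm:range}~(i), so there is no internal proof to compare your attempt against. Your outline does correctly describe the architecture of Dusart's actual argument --- reduce to positivity of $\theta(m)-\theta(n)$ over the interval, and feed explicit bounds $|\theta(x)-x|\leq \eta_k\, x/\log^k x$ (obtained from a zero-free region, a numerical verification of RH up to height $T_0$, and an explicit Perron-type analysis) into the telescoped difference. In that sense you have identified the right tool and the right reason the exponent $k$ must exceed $3$ for the main term $n/\log^3 n$ to dominate.

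However, as a proof the proposal has a genuine gap, and you acknowledge it yourself: the entire quantitative content is deferred. The existence of an admissible pair $(\eta_k, X_k)$ with $k\geq 4$ is \emph{assumed}, not established, and without explicit values you cannot conclude that the crossover threshold is $89\,693$ rather than, say, $10^{10}$; the finite verification below $n_0$ is likewise only gestured at. For the purposes of this paper that is acceptable --- the author also takes the result on faith from the literature --- but be aware that what you have written is a proof plan whose hardest step (the explicit constants) is precisely the contribution of~\cite{Dusart:2018}. If you wanted a self-contained treatment here, the honest alternatives are either to reproduce Dusart's explicit estimates or simply to cite the theorem, as the paper does.
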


\begin{lemma} \label{lem:binom-recip}
Let $n, r \geq 2$. Then there exists an odd prime $p$ with
$p \mid \binom{r+n}{n}$. In particular,
\[
  2^{r+n} \bigg/ \binom{r+n}{n} \notin \ZZ.
\]
\end{lemma}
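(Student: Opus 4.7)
The plan is to reduce the statement to Sylvester's classical theorem (the ancestor of Theorems~\ref{thm:Faulkner} and~\ref{thm:Hanson} recalled in the excerpt) using the symmetry of binomial coefficients. Concretely, since $\binom{r+n}{n} = \binom{r+n}{r}$, I may swap $r$ and $n$ if necessary, so without loss of generality $r \leq n$; then $r + n \geq 2r \geq 4$, and in particular $r+n \geq 2r \geq 2$, which is exactly the hypothesis of Sylvester's theorem applied to $\binom{r+n}{r}$.

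Sylvester's theorem then furnishes a prime divisor $p$ of $\binom{r+n}{r}$ with $p > r \geq 2$. Since $p > 2$, the prime $p$ is automatically odd, which gives the first assertion. (If one prefers to avoid citing Sylvester directly, the sharper Theorem~\ref{thm:Hanson} yields $p > \tfrac{3}{2} r \geq 3$ outside its three exceptional pairs, and in each of those three pairs a direct inspection of $\binom{4}{2}=6$, $\binom{9}{2}=36$, and $\binom{10}{5}=252$ exhibits an odd prime divisor.)

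For the \emph{In particular} clause I would argue by contradiction: if $2^{r+n}/\binom{r+n}{n}$ were an integer, then $\binom{r+n}{n}$ would divide the pure power of two $2^{r+n}$, forcing every prime factor of $\binom{r+n}{n}$ to equal $2$. This contradicts the odd prime divisor $p$ produced above, and therefore $2^{r+n}/\binom{r+n}{n} \notin \ZZ$.

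I do not anticipate any real obstacle; the only subtlety is making sure one applies Sylvester to the smaller of the two bottom entries, which is exactly what the symmetry reduction $r \leq n$ secures, and the hypothesis $n, r \geq 2$ is precisely what forces the resulting prime to exceed $2$.
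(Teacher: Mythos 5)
Your proof is correct and follows essentially the same route as the paper: reduce by the symmetry $\binom{r+n}{n}=\binom{r+n}{r}$ to the smaller bottom index and invoke a Sylvester-type theorem to produce a prime divisor exceeding $2$. The only cosmetic difference is that the paper cites Faulkner's refinement (Theorem~\ref{thm:Faulkner}, giving $p \ge \tfrac{7}{5}n \ge \tfrac{14}{5}$, hence $p \ge 3$) whereas you use the classical Sylvester bound $p > r \ge 2$; both yield the same conclusion.
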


\begin{proof}
Since ${\binom{r+n}{n} = \binom{r+n}{r}}$ by symmetry, we can assume that
$r \geq n$. Thus, we have $r+n \geq 2n$ and $n \geq 2$.
By Theorem~\ref{thm:Faulkner} there exists a prime $p \geq \tfrac{7}{5} n$,
so $p \geq 3$ that divides $\binom{r+n}{n}$.
\end{proof}

For a polynomial ${f(x) \in \QQ[x]}$, its denominator $\denom(f(x))$
is the smallest positive integer $d$ such that $d \cdot f(x) \in \ZZ[x]$,
the latter polynomial having coprime coefficients.
This definition includes the usual definition of $\denom(q)$ for $q \in \QQ$.
In particular, $\denom(q) = 1$ if and only if $q \in \ZZ$.

\begin{lemma} \label{lem:denom}
Let $n \geq 0$ and define the polynomial
\[
  f(x) = \sum_{\nu=0}^{n} a_\nu \, x^\nu
\]
with rational coefficients $a_\nu$. Then
\[
  \denom( f(x) ) = \lcm( \denom(a_0), \dots, \denom(a_n) ).
\]
For $\ell \in \ZZ$, we have
\[
  \denom( f(\ell) ) \mid \denom( f(x) ).
\]
\end{lemma}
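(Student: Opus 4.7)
The plan is to prove the two assertions separately, first the identity $\denom(f(x)) = \lcm(\denom(a_0),\dots,\denom(a_n))$, and then the divisibility $\denom(f(\ell)) \mid \denom(f(x))$, which follows almost immediately from the first part together with the definition.

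Write $D = \denom(f(x))$ and $d = \lcm(\denom(a_0),\dots,\denom(a_n))$. For the direction $d \mid D$, I would use the definition: since $D \cdot f(x) \in \ZZ[x]$, each coefficient $D a_\nu$ lies in $\ZZ$, hence $\denom(a_\nu) \mid D$ for every $\nu$. Taking the least common multiple gives $d \mid D$. For the reverse $D \mid d$, note first that $d \cdot f(x)$ does lie in $\ZZ[x]$, because $d$ clears every denominator $\denom(a_\nu)$. Let $g = \gcd(d a_0,\dots,d a_n)$; then $(d/g)\, f(x)$ has integer coprime coefficients, so by the definition of $\denom(f(x))$ one obtains $D = d/g$. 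Combining this with $d \mid D$ already established forces $g \mid 1$, i.e.\ $g = 1$, and therefore $D = d$.

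For the second claim, let $D = \denom(f(x))$, so $D \cdot f(x) \in \ZZ[x]$. Since $\ell \in \ZZ$, evaluation preserves integrality: $D \cdot f(\ell) \in \ZZ$. By the definition of the denominator of a rational number, this means $\denom(f(\ell)) \mid D = \denom(f(x))$.

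The only mild subtlety—and essentially the one substantive step—is handling the coprimality clause in the definition of $\denom(f(x))$: one must check that the ``minimality plus coprime coefficients'' formulation agrees with $\lcm$ of coefficient denominators, which is done via the $g$-argument above. Beyond this, the lemma is straightforward bookkeeping, and no deeper result from the paper is required.
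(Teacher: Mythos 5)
Your direction $d \mid D$ and the entire second claim are correct, and the second claim is argued exactly as in the paper (evaluation at an integer can only increase $p$-adic valuations of the terms). The problem is the step you yourself single out as the substantive one: from $g = \gcd(d a_0,\dots,d a_n)$ you conclude that ``$(d/g)f(x)$ has integer coprime coefficients, so by the definition $D = d/g$.'' This is not justified, and the intermediate claim $D = d/g$ is false in general. Take $f(x) = \tfrac{2}{3} + \tfrac{4}{3}x$: then $d = \lcm(3,3) = 3$ and $g = \gcd(2,4) = 2$, so $d/g = \tfrac{3}{2}$ is not a positive integer and cannot equal $\denom(f(x))$, while the lemma (correctly) gives $\denom(f(x)) = 3$. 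Your subsequent deduction that ``$d \mid D$ forces $g = 1$'' therefore rests on a false premise; $g$ need not be $1$. The coprimality clause in the paper's definition is not a constraint to be enforced by dividing out a gcd — under that reading the denominator would not even exist for the $f$ above. The definition has to be read as ``the smallest positive integer $d$ with $d\,f(x) \in \ZZ[x]$''; for the monic polynomials to which the paper actually applies the lemma, the coprimality of the resulting coefficients is then automatic, since the leading coefficient of $d\,f(x)$ is $d$ itself.

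The repair is immediate and shorter than your detour: $d\,f(x) \in \ZZ[x]$ because $d$ clears every $\denom(a_\nu)$, so minimality gives $D \le d$, and together with your established $d \mid D$ this yields $D = d$. (Equivalently, argue prime by prime as the paper does: $\ord_p(\denom(f(x))) = \max\bigl(0, -\min_{0 \le \nu \le n} \ord_p(a_\nu)\bigr)$, which is exactly the $p$-part of $\lcm(\denom(a_0),\dots,\denom(a_n))$.)
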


\begin{proof}
This follows from
$\ord_p(f(x)) = \min\limits_{0 \leq \nu \leq n} \ord_p( a_\nu )$ and
$\ord_p( a_\nu \, \ell^\nu) \geq \ord_p( a_\nu )$ for any prime $p$.
\end{proof}

\begin{lemma} \label{lem:denom-2}
Let $n, r \geq 1$. For $0 \leq k \leq n$, we have
\[
  \denom \mleft( \frac{(n)_k}{(r+k)_k} \mright)
    \mids \denom \mleft( \frac{(n)_n}{(r+n)_n} \mright).
\]
If $n +r = p^e$ with $p$ a prime and $e \geq 1$, then we have for
$0 \leq k < n$ the strict inequalities
\[
  \ord_p \mleft( \frac{(r+n)_n}{n!} \mright)
    > \ord_p \mleft( \frac{(r+k)_k}{(n)_k} \mright).
\]
\end{lemma}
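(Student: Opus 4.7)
The plan is to reduce both parts of the lemma to a single algebraic identity that rescales $(n)_k/(r+k)_k$ to an integer binomial coefficient. First I would rewrite
\[
  \frac{(n)_k}{(r+k)_k} = \frac{n!\, r!}{(n-k)!\,(r+k)!},
\]
and observe that multiplying by $\binom{r+n}{n}$ yields
\[
  \binom{r+n}{n} \cdot \frac{(n)_k}{(r+k)_k}
    = \frac{(r+n)!}{(n-k)!\,(r+k)!}
    = \binom{r+n}{n-k} \in \ZZ.
\]

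For part~(i), since $(n)_n/(r+n)_n = 1/\binom{r+n}{n}$ has denominator exactly $\binom{r+n}{n}$, the displayed identity shows that this integer is a common denominator for every $(n)_k/(r+k)_k$ with $0 \leq k \leq n$, whence the asserted divisibility.

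For part~(ii), the quantity $(r+k)_k/(n)_k$ is the reciprocal of $(n)_k/(r+k)_k$, so after taking $p$-adic valuations the identity above becomes
\[
  \ord_p\!\left( \frac{(r+n)_n}{n!} \right) - \ord_p\!\left( \frac{(r+k)_k}{(n)_k} \right)
    = \ord_p \binom{r+n}{n-k}.
\]
Under the hypothesis $r + n = p^e$ with $r \geq 1$, for $0 \leq k < n$ one has $1 \leq n - k \leq n < p^e$, so Lemma~\ref{lem:binom-p} gives $\ord_p\binom{p^e}{n-k} \geq 1$, which yields the strict inequality claimed.

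The whole argument amounts to a one-line manipulation plus an application of Lemma~\ref{lem:binom-p}; the only step requiring a moment of thought is spotting the rescaling that converts $(n)_k/(r+k)_k$ into an integer binomial coefficient, and I do not foresee any substantive obstacle beyond that.
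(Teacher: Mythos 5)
Your proof is correct and is essentially the paper's own argument: the identity $\binom{r+n}{n}\cdot\frac{(n)_k}{(r+k)_k}=\binom{r+n}{n-k}$ is just a rearrangement of the factorization $\frac{(r+n)_n}{n!}=\binom{r+n}{n-k}\frac{(r+k)_k}{(n)_k}$ used there, and both proofs then invoke the integrality of $\binom{r+n}{n-k}$ for the divisibility claim and Lemma~\ref{lem:binom-p} for the strict inequality when $r+n=p^e$. No issues.
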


\begin{proof}
Let $k \in \set{0,\ldots,n}$. We then have
\[
  \frac{(r+n)_n}{n!} = \frac{(r+n)_{n-k}}{(n-k)!} \, \frac{(r+k)_k}{(n)_k}
    = \binom{r+n}{n-k} \frac{(r+k)_k}{(n)_k}.
\]
For any prime $p$, we have $\binom{r+n}{n-k} \in \ZZ_p$. This shows that
\[
  \ord_p \mleft( \frac{(r+n)_n}{n!} \mright)
    \geq \ord_p \mleft( \frac{(r+k)_k}{(n)_k} \mright),
\]
implying the first claim. Now, if ${n +r = p^e}$, then
by Lemma~\ref{lem:binom-p} we have that $\binom{r+n}{n-k} \in p\ZZ_p$
for $0 \leq k < n$, proving the second claim.
\end{proof}

\begin{lemma} \label{lem:congr-ell}
For $\ell \in \ZZ$ and $n \geq 2$, we have
\[
  \frac{\ell^{n}-(\ell-1)^{n}}{n} \notin \ZZ.
\]
\end{lemma}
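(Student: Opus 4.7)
The plan is to argue by contradiction: assume $n \mid \ell^{n} - (\ell-1)^{n}$ and let $p$ denote the smallest prime divisor of $n$, which exists since $n \geq 2$. Then in particular $p \mid \ell^{n} - (\ell-1)^{n}$, and I would split into two cases according to whether $p$ divides $\ell(\ell-1)$ or not.

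If $p \mid \ell(\ell-1)$, say $p \mid \ell$ (the case $p \mid \ell - 1$ being symmetric), then $p \mid \ell^{n}$, while $\gcd(p, \ell-1) = 1$ forces $p \nmid (\ell-1)^{n}$. Hence $p \nmid \ell^{n} - (\ell-1)^{n}$, contradicting our assumption. This subsumes the edge cases $\ell \in \set{0,1}$, where one of the two terms is $\pm 1$ outright.

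In the remaining case, both $\ell$ and $\ell - 1$ are units modulo $p$, and $\ell^{n} \equiv (\ell-1)^{n} \pmod{p}$ means that $u := \ell \cdot (\ell - 1)^{-1} \in (\ZZ/p\ZZ)^{\ast}$ satisfies $u^{n} \equiv 1 \pmod{p}$. Hence the multiplicative order $d$ of $u$ divides both $n$ and $p-1$ (the latter by Fermat's little theorem). Because $p$ is the least prime divisor of $n$, every prime factor of $p-1$ is strictly smaller than $p$ and therefore coprime to $n$, giving $\gcd(n, p-1) = 1$. So $d = 1$, i.e., $\ell \equiv \ell - 1 \pmod{p}$, which yields $p \mid 1$, the desired contradiction.

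There is no real obstacle in this argument; it is the classical ``smallest prime divisor'' trick (the same device that shows $n \nmid 2^{n}-1$ for $n>1$), and it handles every $\ell \in \ZZ$ uniformly, so no separate analysis on the sign or size of $\ell$ is needed.
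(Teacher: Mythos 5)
Your proof is correct and uses essentially the same argument as the paper: assume divisibility, pass to the smallest prime divisor $p$ of $n$, observe that $\ell/(\ell-1)$ has order dividing $\gcd(n,p-1)=1$ in $(\ZZ/p\ZZ)^\ast$, and derive a contradiction via Fermat's little theorem. The only (cosmetic) difference is that you treat all $\ell \in \ZZ$ uniformly by absorbing the case $p \mid \ell(\ell-1)$ into a direct contradiction, whereas the paper first reduces to $\ell \geq 2$ by the reflection $f_n(\ell) = (-1)^{n+1} f_n(1-\ell)$ and separately establishes $\gcd(n,\ell(\ell-1))=1$.
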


\begin{proof}
The cases $\ell \in \set{0,1}$ are trivial. Define
$f_n(\ell) = \ell^{n}-(\ell-1)^{n}$.
It is easy to see that we have a reflection relation by
\[
  f_n(\ell) = (-1)^{n+1} f_n(1-\ell).
\]
Thus, there remains to consider the integers $\ell \geq 2$.
Now fix $\ell, n \geq 2$ and assume to the contrary that
\[
  \ell^{n} \equiv (\ell-1)^{n} \pmod{n}.
\]
We have $g = \gcd(n,\ell(\ell-1)) = 1$. Otherwise, $p \mid g$ would imply
a congruence of the type $0 \equiv (\pm 1)^n \pmod{p}$. Since $g = 1$ and
$2 \mid \ell(\ell-1)$, we have $2 \nmid n$. Next we choose the smallest prime
divisor $p \geq 3$ of $n$. We then obtain $b \equiv \ell/(\ell-1) \not\equiv 1 \pmod{p}$
and arrive at $b^n \equiv 1 \pmod{p}$. By Fermat's little theorem, we have
$b^e \equiv 1 \pmod{p}$ with a minimal exponent $e = (p-1)/d > 1$ and $d \mid p-1$.
As a consequence, we infer that $e \mid n$, but this contradicts the assumption
that $p$ is the smallest prime divisor of $n$.
\end{proof}

\begin{remark}
The special case $\ell = 2$ of Lemma~\ref{lem:congr-ell} was handled in \cite{Lopez-Aguayo:2015},
but without giving a proof. Actually, a proof was given in the same
issue as a solution to the initial problem of Chiri\c{t}\u{a} \cite{Chirita:2014}.
(The editors noted there that the fact that $n \nmid 2^n-1$ for $n > 1$ goes back
to a proposed problem in 1972.) A further proof of that case was also given later
in \cite{Thongjunthug:2019}.
\end{remark}

Euler's beta function is defined by
\[
  \BF(x,y) = \int_{0}^{1} t^{x-1} (1-t)^{y-1} dt
\]
for $\re(x) > 0$ and $\re(y) > 0$, which satisfies the identity
\[
  \BF(x,y) = \frac{\Gamma(x) \Gamma(y)}{\Gamma(x+y)},
    \quad \text{where $\Gamma$ is the gamma function.}
\]

%%%%%%%%%%%%%%%%%%%%%%%%%%%%%%%%%%%%%%%%%%%%%%%%%%%%%%%%%%%%%%%%%%%%%%%%%%%%%%%%
% Section
%%%%%%%%%%%%%%%%%%%%%%%%%%%%%%%%%%%%%%%%%%%%%%%%%%%%%%%%%%%%%%%%%%%%%%%%%%%%%%%%

\section{Proofs of basic theorems}
\label{sec:proofs-1}

\begin{lemma} \label{lem:S-int}
Let $n, r \geq 1$ and $x \in \CC$. Then
\begin{align}
  \MS_{(r,n)}(x) &= r\! \int_{0}^{1} (x-t)^n (1-t)^{r-1} dt.
  \label{eq:S-int} \\
\shortintertext{In particular, we have}
  \MS_{(1,n)}(x) &= \frac{x^{n+1}-(x-1)^{n+1}}{n+1},
  \label{eq:S-1n} \\
  \MS_{(r,1)}(x) &= x - \frac{1}{r+1},
  \label{eq:S-r1} \\
\shortintertext{and}
  \MS_{(r,n)}(1) &= \frac{r}{r+n}.
  \label{eq:S-1}
\end{align}
\end{lemma}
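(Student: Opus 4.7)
The plan is to establish the integral formula \eqref{eq:S-int} directly from the defining expansion \eqref{eq:S-def} of $\MS_{(r,n)}(x)$, and then read off the three corollaries \eqref{eq:S-1n}, \eqref{eq:S-r1}, and \eqref{eq:S-1} as one-step specializations.

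For \eqref{eq:S-int}, I would expand $(x-t)^n$ on the right-hand side by the binomial theorem and interchange the (finite) sum with the integral, reducing the claim to showing that
\[
  r\int_{0}^{1} t^{k} (1-t)^{r-1}\,dt = \binom{r+k}{r}^{\!\!-1}
    \quad (0 \leq k \leq n).
\]
This is just the beta-integral identity $\BF(k+1,r) = k!\,(r-1)!/(r+k)!$ multiplied by $r$, so the $k$-th summand matches the $k$-th term of \eqref{eq:S-def} exactly.

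The three specializations then follow by collapsing the integrand. Setting $r=1$ reduces \eqref{eq:S-int} to $\int_{0}^{1} (x-t)^n\,dt$, whose antiderivative $-(x-t)^{n+1}/(n+1)$ immediately yields \eqref{eq:S-1n}. Setting $n=1$ and writing $x-t = (x-1)+(1-t)$ splits the integrand into $(x-1)(1-t)^{r-1} + (1-t)^r$, so two elementary integrations produce \eqref{eq:S-r1}. Setting $x=1$ merges the two factors into $(1-t)^{r+n-1}$, whose integral over $[0,1]$ is $1/(r+n)$ and hence gives \eqref{eq:S-1}. There is no substantive obstacle: the whole argument rests on the beta-function value, and the only step requiring any care is swapping the sum and integral, which is automatic for a finite sum.
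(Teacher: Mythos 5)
Your proof is correct and follows essentially the same route as the paper: expand $(x-t)^n$ binomially, evaluate each term via the beta integral $r\,\BF(k+1,r)=\binom{r+k}{r}^{-1}$, and obtain the three special cases by collapsing the integrand (the paper gets \eqref{eq:S-r1} even more directly from the definition \eqref{eq:S-def} with $n=1$, but your split $x-t=(x-1)+(1-t)$ gives the same result).
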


\begin{proof}
Using the beta function, we infer that
\[
  \int_{0}^{1} (x-t)^n (1-t)^{r-1} dt
    = \sum_{k=0}^{n} \binom{n}{k} (-1)^k \, x^{n-k} \, \BF(k+1,r).
\]
Since $r \, \BF(k+1,r) = 1 / \binom{r+k}{r}$,
this establishes \eqref{eq:S-int} using \eqref{eq:S-def}.
Direct evaluations provide that
\begin{align*}
  \MS_{(1,n)}(x) &= \int_{0}^{1} (x-t)^n dt
    = - \left. \frac{(x-t)^{n+1}}{n+1} \, \right|_{0}^{1}
    = \frac{x^{n+1}-(x-1)^{n+1}}{n+1} \\
\shortintertext{and}
  \MS_{(r,n)}(1) &= r\! \int_{0}^{1} (1-t)^{r+n-1} dt
    = - r \left. \frac{(1-t)^{r+n}}{r+n} \, \right|_{0}^{1}
    = \frac{r}{r+n}.
\end{align*}
Formula \eqref{eq:S-r1} is given by \eqref{eq:S-def} with $n=1$.
\end{proof}

\begin{proof}[Proof of Theorem~\ref{thm:basic}]
We have to show six parts.

(i). This follows from \eqref{eq:S-def} and using $\binom{n}{k} = \frac{(n)_k}{k!}$
and $\binom{r+k}{r} = \binom{r+k}{k} = \frac{(r+k)_k}{k!}$.

(ii). This is given by Lemma~\ref{lem:S-int} and \eqref{eq:S-int}.

(iii), (iv). Differentiating \eqref{eq:S-int} with respect to $x$ yields
$\MS_{(r,n)}(x)' = n \, \MS_{(r,n-1)}(x)$. Together with $\MS_{(r,0)}(x) = 1$
by \eqref{eq:S-0}, the polynomials $\MS_{(r,n)}(x)$ for $n \geq 0$ form an Appell sequence.
As a consequence, part (iv) is equivalent to part (iii), see \cite{Appell:1880}.

(v), (vi). We use part (i) and apply Lemmas~\ref{lem:denom} and \ref{lem:denom-2}.
This shows part (v). Part (vi) follows from using Lemma~\ref{lem:denom} again.
This proves the theorem.
\end{proof}

\begin{proof}[Proof of Corollary~\ref{cor:basic}]
The results are derived from Theorem~\ref{thm:basic}~(ii) and (iv).
We show the claims in order of their dependencies.

(iii). Evaluating the integral \eqref{eq:S-int}, Lemma~\ref{lem:S-int} gives
the result by \eqref{eq:S-1}.

(i). It then follows that
\[
  \MS_{(r,n)}(x+1) = \sum_{k=0}^{n} \binom{n}{k} x^{n-k} \MS_{(r,k)}(1).
\]

(ii), (iv). Note that $\MS_{(r,n)}(0) = (-1)^n \, \binom{r+n}{r}^{\!\!-1}$
by \eqref{eq:S-def}. The identities follow from taking $x = \pm 1$.

(v). We consider the integrand of \eqref{eq:S-int}. For $t \in (0,1)$,
we have $(1-t)^{r-1} > 0$, as well as $(x-t)^n > 0$ for $x \geq 1$
and $(-1)^n (x-t)^n > 0$ for $x \leq 0$.
\end{proof}

\begin{remark}
The partial fraction decomposition
\[
  \sum_{k=0}^{n} \binom{n}{k} \frac{(-1)^k}{x+k}
    = \frac{1}{x} \binom{x+n}{n}^{\!\!-1}
\]
and its inversion
\[
  \frac{x}{x+n} = \sum_{k=0}^{n} \binom{n}{k} (-1)^k \binom{x+k}{k}^{\!\!-1}
\]
are well known, cf.~\cite[Sec.~5.3, p.~196]{GKP:1994} and
\cite[\S\,4, p.~54]{Norlund:1924}. Instead of using finite differences,
the identities are derived here from the integral \eqref{eq:S-int} and
the Appell properties of $\MS_{(r,n)}(x)$.
\end{remark}

\begin{proof}[Proof of Theorem~\ref{thm:row}]
Let $n, r \geq 1$. We have to show four parts.

(i), (ii). It is easy to verify that
\[
  \binom{n}{k} \binom{r+n-k}{r}^{\!\!-1}
    = \binom{r+n}{k} \binom{r+n}{n}^{\!\!-1}.
\]
We reverse the summation of \eqref{eq:S-def} and use the above identity. Thus,
\[
  \MS_{(r,n)}(x)
    = \sum_{k=0}^{n} \binom{n}{k} (-1)^{n-k} \, x^k \binom{r+n-k}{r}^{\!\!-1}
    \!\!\!= \binom{r+n}{n}^{\!\!-1} \sum_{k=0}^{n} \binom{r+n}{k} (-1)^{n-k} x^k\!.
\]
By taking $x=-1$, the formula for $\MS_{(r,n)}(-1)$ follows.

(iii). Due to the symmetry of the binomial coefficients, we sum from the left-hand
and right-hand side in a row of Pascal's triangle. Therefore, this yields
\begin{equation} \label{eq:S-recip-1}
  \sum_{k=0}^{n} \binom{r+n}{k} + \sum_{k=0}^{r} \binom{r+n}{k}
    = 2^{r+n} + \binom{r+n}{n},
\end{equation}
where $\binom{r+n}{n} = \binom{r+n}{r}$ is counted twice.
Considering the sign and the extra factor of $\MS_{(r,n)}(-1)$,
we finally obtain the reciprocity relation
\begin{equation} \label{eq:S-recip-2}
  (-1)^n \MS_{(r,n)}(-1) + (-1)^r \MS_{(n,r)}(-1)
    = 2^{r+n} \binom{r+n}{n}^{\!\!-1} + 1.
\end{equation}

(iv). First we have $\MS_{(r,1)}(-1) \notin \ZZ$ by \eqref{eq:S-r1}.
Further we infer from applying Lemmas~\ref{lem:congr-ell} and \ref{lem:S-int} that
$\MS_{(1,n)}(-1) \notin \ZZ$. Thus, we can now assume that $n,r \geq 2$.
Lemma~\ref{lem:binom-recip} shows that the right-hand side of \eqref{eq:S-recip-2}
is not integral, implying that $\MS_{(r,n)}(-1)$ and $\MS_{(n,r)}(-1)$ cannot be
both integers. As a consequence, ${\MS_{(n,n)}(-1) \notin \ZZ}$ for $n \geq 1$.
Further, direct computations via \eqref{eq:S-recip-1} and \eqref{eq:S-recip-2}
imply that
\begin{align*}
  (-1)^n \MS_{(n+1,n)}(-1) &= 2^{2n} \binom{2n+1}{n}^{\!\!-1} \notin \ZZ \\
\shortintertext{and}
  (-1)^{n+1} \MS_{(n,n+1)}(-1) &= 2^{2n} \binom{2n+1}{n}^{\!\!-1} + 1 \notin \ZZ,
\end{align*}
using the same arguments from above. This proves the theorem.
\end{proof}

\begin{proof}[Proof of Corollary~\ref{cor:row}]
The first equation follows from combining Theorem~\ref{thm:basic}~(ii) and
Theorem~\ref{thm:row}~(ii), the second one from Corollary~\ref{cor:basic}~(i)
with $x = -2$.
\end{proof}

\begin{proof}[Proof of Theorem~\ref{thm:recip}]
Let $n, r \geq 1$. We introduce the notation
\[
  (x+y)^{n,m} = \sum_{k=0}^{m} \binom{n}{k} x^{n-k} y^k \quad (0 \leq m \leq n)
\]
for partial sums of the binomial identity, which is not commutative in general.
It is easy to see that
\begin{equation} \label{eq:part-sum}
  (x+y)^{n,m} + (y+x)^{n,n-m} = (x+y)^n + \binom{n}{m} x^{n-m} y^m.
\end{equation}

From Theorem~\ref{thm:row}~(i), it then follows that
\begin{align*}
  \MS_{(r,n)}(x) &= (-1)^r \binom{r+n}{r}^{\!\!-1} (-1+x)^{r+n,n}, \\
  x^r \MSR_{(r,n)}(x) &= (-1)^n \binom{r+n}{r}^{\!\!-1} (x-1)^{r+n,n}.
\end{align*}
Using \eqref{eq:part-sum}, we finally derive that
\[
  \MS_{(r,n)}(x) + x^n \MSR_{(n,r)}(x)
    = (-1)^r (x-1)^{r+n} \binom{r+n}{r}^{\!\!-1} + x^n. \qedhere
\]
\end{proof}

\begin{proof}[Proof of Proposition~\ref{prop:recur}]
Let $n, r \geq 1$.  We have to show four parts, where we make use of the
integral formula \eqref{eq:S-int}.

(i). Rewriting the integrand by
\begin{align*}
  (x-t)^n (1-t)^{r-1} &= (x-1+1-t) (x-t)^{n-1} (1-t)^{r-1} \\
    &= (x-1) (x-t)^{n-1} (1-t)^{r-1} + (x-t)^{n-1} (1-t)^r
\end{align*}
implies the formula
\begin{equation} \label{eq:recur-1}
  \MS_{(r,n)}(x)
    = (x-1) \, \MS_{(r,n-1)}(x) + \frac{r}{r+1} \, \MS_{(r+1,n-1)}(x).
\end{equation}

(ii). We use integration by parts that
\[
  f(t)g(t) \bigg|_{0}^{1}
    = \int_{0}^{1} f(t)g'(t) dt + \int_{0}^{1} f'(t)g(t) dt.
\]
Set $f(t) = - (x-t)^n$ and $g(t) = (1-t)^r$. Then we obtain the equation
\begin{equation} \label{eq:recur-2}
  x^n = \MS_{(r,n)}(x) + \frac{n}{r+1} \, \MS_{(r+1,n-1)}(x).
\end{equation}

(iii). Subtracting \eqref{eq:recur-1} from \eqref{eq:recur-2} and shifting
the index by $n \mapsto n+1$ yield
\[
  \MS_{(r+1,n)}(x)
    = \frac{r+1}{r+n+1} \left( x^{n+1} - (x-1) \MS_{(r,n)}(x) \right).
\]

(iv). Multiply \eqref{eq:recur-1} by $n$ and \eqref{eq:recur-2} by $r$,
respectively, and subtract the equations. Divide the resulting equation by
$n+r$ and shift the index by $n \mapsto n+1$. Finally, this gives the equation
\[
  \MS_{(r,n+1)}(x)
    = \frac{r}{r+n+1} x^{n+1} + \frac{n+1}{r+n+1} (x-1) \MS_{(r,n)}(x),
\]
completing the proof.
\end{proof}

For $n, r \geq 1$, recall by \eqref{eq:psi-1} the function
\begin{equation} \label{eq:psi-def}
  \psi_{(r,n)}(x) = \sum_{k=0}^{r-1} \binom{n+k}{k} (-1)^k (x-1)^{r-1-k},
\end{equation}
which satisfies the recurrence
\begin{equation} \label{eq:psi-rec}
  \psi_{(r+1,n)}(x) = (x-1) \psi_{(r,n)}(x) + (-1)^r \binom{n+r}{r}.
\end{equation}

\begin{lemma} \label{lem:psi-congr}
Let $n, r \geq 1$ and $\ell \in \ZZ$. For $d \in \set{1,\ldots,r}$, we have
\[
  r! \, \psi_{(r,n)}(\ell)
    \equiv r! \, \ell^{d-1} (\ell-1)^{r-d} \pmod{n+d}.
\]
\end{lemma}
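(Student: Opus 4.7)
The plan is to work directly from the expansion \eqref{eq:psi-1}, namely
\[
  r!\,\psi_{(r,n)}(\ell) = \sum_{k=0}^{r-1} r!\binom{n+k}{k}(-1)^k (\ell-1)^{r-1-k},
\]
and to analyze each coefficient $r!\binom{n+k}{k}$ modulo $n+d$. Writing
$r!\binom{n+k}{k} = \frac{r!}{k!}(n+1)(n+2)\cdots(n+k)$, I separate two cases according to whether $k\ge d$ or $k<d$.

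For $k \geq d$ the product $(n+1)\cdots(n+k)$ visibly contains the factor $(n+d)$, and $r!/k!$ is an integer (since $k\le r-1$), so the whole term vanishes modulo $n+d$. Only the range $0 \leq k \leq d-1$ survives. For these terms I use $n+j \equiv j-d \pmod{n+d}$ to obtain
\[
  (n+1)(n+2)\cdots(n+k) \equiv (1-d)(2-d)\cdots(k-d) = (-1)^k \frac{(d-1)!}{(d-1-k)!} \pmod{n+d},
\]
and hence $r!\binom{n+k}{k} \equiv (-1)^k r!\binom{d-1}{k} \pmod{n+d}$.

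Plugging this in gives
\[
  r!\,\psi_{(r,n)}(\ell) \equiv r! \sum_{k=0}^{d-1}\binom{d-1}{k}(\ell-1)^{r-1-k} \pmod{n+d},
\]
and factoring out $(\ell-1)^{r-d}$ reduces the sum to $\sum_{k=0}^{d-1}\binom{d-1}{k}(\ell-1)^{d-1-k}$, which by the binomial theorem equals $((\ell-1)+1)^{d-1}=\ell^{d-1}$. This yields the stated congruence $r!\,\psi_{(r,n)}(\ell) \equiv r!\,\ell^{d-1}(\ell-1)^{r-d} \pmod{n+d}$.

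The only subtlety, and the one point I would double-check carefully, is the integrality step used in disposing of the $k \geq d$ terms: one must confirm that $\frac{r!}{k!}$ and $(n+1)\cdots(n+k)/(n+d)$ are both integers so that the product is a genuine multiple of $n+d$ rather than just a rational with a factor of $n+d$ in the numerator. Since $d \leq r$ and $k\le r-1 < r$ this is automatic. An alternative cross-check, which I would mention if space allows, is to verify the boundary cases $d=1$ and $d=r$ against the recurrence \eqref{eq:psi-rec}, where one immediately recovers $r!\,\psi_{(r,n)}(\ell) \equiv r!(\ell-1)^{r-1}$ modulo $n+1$ and $r!\,\psi_{(r,n)}(\ell) \equiv r!\,\ell^{r-1}$ modulo $n+r$, consistent with the formula.
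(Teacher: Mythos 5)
Your proof is correct and follows essentially the same route as the paper: the paper's key step is the single congruence $(n+k)_k(-1)^k \equiv (d-1)_k \pmod{n+d}$ (with the falling factorial vanishing for $k \geq d$), which is exactly your two-case analysis, and both arguments then collapse the surviving sum via the binomial theorem to $\ell^{d-1}(\ell-1)^{r-d}$. Your explicit attention to the integrality of $\tfrac{r!}{k!}$ and of $(n+1)\cdots(n+k)/(n+d)$ is a worthwhile precaution that the paper leaves implicit.
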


\begin{proof}
Let $n, r \geq 1$ and $1 \leq d \leq r$. We infer for $k \geq 0$ that
\[
  (n+k)_k (-1)^k \equiv (d-1)_k \pmod{n+d}.
\]
Using \eqref{eq:psi-def}, we obtain for $\ell \in \ZZ$ that
\begin{align*}
  r! \, \psi_{(r,n)}(\ell)
    &\equiv \sum_{k=0}^{r-1} \frac{r!}{k!} (n+k)_k (-1)^k (\ell-1)^{r-1-k} \\
    &\equiv \sum_{k=0}^{r-1} \frac{r!}{k!} (d-1)_k (\ell-1)^{r-1-k} \\
    &\equiv r! \, (\ell-1)^{r-d} \sum_{k=0}^{d-1} \binom{d-1}{k} (\ell-1)^{d-1-k} \\
    &\equiv r! \, \ell^{d-1} (\ell-1)^{r-d} \pmod{n+d},
\end{align*}
as desired.
\end{proof}

\begin{proof}[Proof of Theorem~\ref{thm:ident}]
Let $n, r \geq 1$. We first show that
\begin{equation} \label{eq:S-psi}
  \MS_{(r,n)}(x) = (-1)^{r} \binom{n+r}{r}^{\!-1} \!
    \left( (x-1)^{n+r} - x^{n+1} \psi_{(r,n)}(x) \right).
\end{equation}
Now, fix $n$. We use induction on $r$.
By Lemma~\ref{lem:S-int} and~\eqref{eq:S-1n}, we have
\[
  \MS_{(1,n)}(x) = \frac{x^{n+1}-(x-1)^{n+1}}{n+1},
\]
which coincides with \eqref{eq:S-psi} in the case $r=1$. Inductive step:
we assume that \eqref{eq:S-psi} holds for $r$ and prove for $r+1$.
We use the recurrence formula of Proposition~\ref{prop:recur}~(iii).
Thus, we obtain
\begin{align*}
 \frac{r+n+1}{r+1} \MS_{(r+1,n)}(x)
 &= x^{n+1} - (x-1) \MS_{(r,n)}(x) \\
 &= (-1)^{r} \binom{n+r}{r}^{\!-1}
   \bigg( (-1)^r \binom{n+r}{r} x^{n+1} - (x-1)^{n+r+1} \\
 &\quad + x^{n+1} (x-1) \psi_{(r,n)}(x) \bigg) \\
 &= (-1)^{r+1} \binom{n+r}{r}^{\!-1}
   \left( (x-1)^{n+r+1} - x^{n+1} \psi_{(r+1,n)}(x) \right)\!,
\end{align*}
where the last equation follows from \eqref{eq:psi-rec}.

This implies that $\MS_{(r+1,n)}(x)$ is equal to~\eqref{eq:S-psi} in the case
$r+1$. Finally, identities \eqref{eq:S-ident} and \eqref{eq:psi-1} are
established. To show the alternative identity \eqref{eq:psi-2}, we have by
Theorem~\ref{thm:recip} that
\[
  \MS_{(r,n)}(x) + x^n \MSR_{(n,r)}(x)
    = (-1)^r (x-1)^{r+n} \binom{r+n}{r}^{\!\!-1} + x^n.
\]
By Theorem~\ref{thm:row}~(i), we can write
\begin{align*}
  x^n \MSR_{(n,r)}(x) &= (-1)^r x^n \binom{r+n}{r}^{\!\!-1}
    \sum_{k=0}^{r} \binom{r+n}{k} (-1)^k \, x^{r-k} \\
  &= x^n + (-1)^r x^{n+1} \, \binom{r+n}{r}^{\!\!-1} \widetilde{\psi}_{(r,n)}(x),
\end{align*}
where
\begin{equation} \label{eq:S-psi-tilde}
  \widetilde{\psi}_{(r,n)}(x)
    = \sum_{k=0}^{r-1} \binom{r+n}{k} (-1)^k x^{r-1-k}.
\end{equation}
Putting all together shows \eqref{eq:S-ident}, but holding with \eqref{eq:psi-2}.
Consequently, we obtain
\[
  \psi_{(r,n)}(x) = \widetilde{\psi}_{(r,n)}(x).
\]

Lastly, parts (i) and (ii) are given by \eqref{eq:S-1n} and \eqref{eq:S-r1},
respectively. Part~(iii) follows from \eqref{eq:S-psi} by taking $x=2$.
This proves the theorem.
\end{proof}

\begin{remark}
Searching for identities similar to \eqref{eq:S-psi} in the literature,
one finds the following identity in Gould's tables of combinatorial identities
of 1972 (see~\cite[Eq.~(4.13), p.~47]{Gould:1972}) that
\[
  \sum_{k=0}^{n} \binom{n}{k} \frac{x^k}{\binom{k+r}{k}}
    = 1 + \frac{(x+1)^{n+r} - \sum_{k=0}^{r} \binom{n+r}{k}x^k}{x^r \binom{n+r}{n}},
\]
from which one can also deduce formula \eqref{eq:S-psi} with \eqref{eq:S-psi-tilde}.
\end{remark}

%%%%%%%%%%%%%%%%%%%%%%%%%%%%%%%%%%%%%%%%%%%%%%%%%%%%%%%%%%%%%%%%%%%%%%%%%%%%%%%%
% Section
%%%%%%%%%%%%%%%%%%%%%%%%%%%%%%%%%%%%%%%%%%%%%%%%%%%%%%%%%%%%%%%%%%%%%%%%%%%%%%%%

\section{Proofs of main theorems}
\label{sec:proofs-2}

\begin{proof}[Proof of Proposition~\ref{prop:congr-1}]
This easily follows from Theorem~\ref{thm:row}~(i).
\end{proof}

\begin{proof}[Proof of Proposition~\ref{prop:congr-2}]
Let $n, r \geq 1$ and $\ell \in \ZZ \setminus \set{0,1}$.
We use Theorem~\ref{thm:ident} and assume that $\MS_{(r,n)}(\ell) \in \ZZ$.
Then the numerator of \eqref{eq:S-ident} must be divisible by each factor
$n+d$ of the denominator for $1 \leq d \leq r$. Using Lemma~\ref{lem:psi-congr},
we infer the necessary but not sufficient conditions that
\begin{align*}
  0 &\equiv r! \left( (\ell-1)^{r+n} - \ell^{n+1} \, \psi_{(r,n)}(\ell) \right) \\
    &\equiv r! \, (\ell-1)^{r-d} \left( (\ell-1)^{n+d} - \ell^{n+d} \right)
    \equiv a_d \pmod{n+d}
\end{align*}
for $1 \leq d \leq r$. Conversely, if one $a_d \not\equiv 0 \pmod{n+d}$,
then $\MS_{(r,n)}(\ell) \notin \ZZ$.
\end{proof}

\begin{proof}[Proof of Theorem~\ref{thm:main}]
Let $n, r \geq 1$ and $\ell \in \ZZ$. We show six conditions in order of
their dependencies that imply $\MS_{(r,n)}(\ell) \notin \ZZ$ and also
$\MS_{(n,r)}(\ell) \notin \ZZ$ by symmetry of $\binom{r+n}{r} = \binom{n+r}{n}$.
It is easy to see that the results also hold for $-\ell$ except for part (v).

(i). This follows from Lemmas~\ref{lem:congr-ell} and \ref{lem:S-int}.

(iii). By Theorem~\ref{thm:basic} we have
\[
  \MS_{(r,n)}(\ell)
    = \sum_{k=0}^{n} (-1)^k \, \ell^{n-k} \, \frac{(n)_k}{(r+k)_k}.
\]
Since $r+n=p^e$ is a prime power with $e \geq 1$,
Lemma~\ref{lem:denom-2} shows that the last summand satisfies that
\[
  v = \ord_p \mleft( \frac{n!}{(r+n)_n} \mright)
    < \ord_p \mleft( \frac{(n)_k}{(r+k)_k} \mright)
    \leq \ord_p \mleft( \ell^{n-k} \frac{(n)_k}{(r+k)_k} \mright)
\]
for $0 \leq k < n$. By Lemma~\ref{lem:binom-p}, we have
$(r+n)_n/n! = \binom{r+n}{n} \in p\ZZ_p$, so $v < 0$.
Using Lemma~\ref{lem:val-min}, the result follows.

(v). This is given by Corollary~\ref{cor:basic}~(ii) and (iii) for
$\ell \in \set{0,1}$, and by Theorem~\ref{thm:row}~(iv) for $\ell = -1$.

(vi). Let $b = \binom{r+n}{r}$ and $g = \gcd (b, \ell) > 1$.
Note that case $\ell = 0$ is compatible with Corollary~\ref{cor:basic}~(ii),
since $g = b$. So we assume that $|\ell| \geq 2$. Now fix a prime divisor
$p$ of $g$. We use Theorem~\ref{thm:ident} to derive that
\[
  (-1)^{r} \MS_{(r,n)}(\ell) = \frac{(\ell-1)^{r+n}}{b}
    - \frac{\ell^{n+1} \, \psi_{(r,n)}(\ell)}{b} = f_1 - f_2.
\]
Assume that $\psi_{(r,n)}(\ell) \neq 0$; otherwise, we are done.
Since $p \mid g$ and $\gcd(\ell-1,\ell) = 1$, we obtain for the fractions that
\[
  - \ord_p(b) = \ord_p(f_1) < \ord_p(f_2).
\]
From Lemma~\ref{lem:val-min}, we finally infer that
\[
  \ord_p(\denom(\MS_{(r,n)}(\ell))) = \ord_p( b ) > 0.
\]

(ii). The case $r=n$ is postponed and borrowed from the proof of
Theorem~\ref{thm:diag} below, which uses the independent part (vi).

(iv). Assume that $r$ and $n$ are odd, and $\ell$ is even. The case $\ell = 0$
is handled by part~(v), so $|\ell| \geq 2$. Let $b = \binom{r+n}{r}$ and
$g = \gcd (b, \ell)$. Since
\[
  b \equiv \binom{r+n-1}{r-1} \frac{r+n}{r} \equiv 0 \pmod{2},
\]
we have $2 \mid g$, and we can apply part (vi).
This completes the proof.
\end{proof}

\begin{proof}[Proof of Theorem~\ref{thm:diag}]
The first formula is given by Theorem~\ref{thm:row}~(i), where
\[
  \MS_{(1,1)}(x) = x - \tfrac{1}{2}.
\]
The recurrence formula follows
from Proposition~\ref{prop:recur}~(iii) and (iv). Hence,
\begin{equation} \label{eq:S-recur}
  \MS_{(n+1,n+1)}(x) = A_n(x) + \tfrac{1}{2} x^{n+1}
\end{equation}
with
\[
  A_n(x) = \frac{n+1}{2n+1} \frac{x-1}{2}
    \left( x^{n+1} - (x-1) \MS_{(n,n)}(x) \right).
\]

Now, let $\ell \in \ZZ$ be odd. We use Lemma~\ref{lem:val-min} implicitly.
We have $\MS_{(1,1)}(\ell) \in \frac{1}{2} \ZZ_2 \setminus \ZZ_2$.
We use induction on $n$. Assume that
$\MS_{(n,n)}(\ell) \in \frac{1}{2} \ZZ_2 \setminus \ZZ_2$.
Since $\ell-1$ is even, it follows that $ A_n(\ell) \in \ZZ_2$.
From \eqref{eq:S-recur} we deduce that
${\MS_{(n+1,n+1)}(\ell) \in \frac{1}{2} \ZZ_2 \setminus \ZZ_2}$.
Finally, we obtain that $\ord_2(\MS_{(n,n)}(\ell)) = -1$ for all $n \geq 1$.

In the other case, where $\ell \in \ZZ$ is even, we use
Theorem~\ref{thm:main}~(vi). For $n \geq 1$, we have
$\binom{2n}{n} = 2 \binom{2n-1}{n-1}$,
and thus $2 \mid g = \gcd \mleft( \binom{2n}{n}, \ell \mright)$.
Using \eqref{eq:binom-2}, we then infer that
\[
  \ord_2(\MS_{(n,n)}(\ell)) = -\ord_2 \mleft( \binom{2n}{n} \mright)
    = - (2s_2(n) - s_2(2n)) = - s_2(n).
\]

As a result, $\MS_{(n,n)}(\ell) \notin \ZZ$ for $n \geq 1$ and $\ell \in \ZZ$.
This proves the theorem.
\end{proof}

\begin{remark}
Theorem~\ref{thm:diag} implies for $n \geq 1$ and odd $\ell \in \ZZ$ that
\[
  \MS_{(n,n)}(\ell) - \MS_{(n,n)}(1) = \binom{2n}{n}^{\!\!-1}
    \sum_{k=0}^{n} \binom{2n}{k} (-1)^{n-k} \, (\ell^k - 1) \in \ZZ_2.
\]
However, a direct proof via $2$-adic valuation of the above summands seems to
be complicated. More generally, it follows for odd $\ell, \ell_2 \in \ZZ$ that
\[
  \MS_{(n,n)}(\ell) - \MS_{(n,n)}(\ell_2) \in \ZZ_2.
\]
\end{remark}

\begin{proof}[Proof of Corollary~\ref{cor:density}]
We first consider Theorem~\ref{thm:Singmaster}. By a simple counting argument
and excluding those binomial coefficients that equal $1$, we arrive at an
equivalent formulation as follows. For $d \geq 1$ and $m \geq 2$ define the sets
\[
  \widetilde{B}_m(d) = \set{ (r,n) \in \ZZ^2 : n,r \geq 1, \; 1 \leq r+n \leq m,
    \,\text{and}\;\; d \mid \binom{r+n}{r} }.
\]
Then we also have the density
\[
  \lim_{m \to \infty} \# \widetilde{B}_m(d) \Big/ \binom{m}{2} = 1.
\]
Let $d = |\ell| \geq 2$. By Theorem~\ref{thm:main}~(vi), we have that
\[
  \gcd \mleft( \binom{r+n}{r}, d \mright)
    > 1 \impliesq \MS_{(r,n)}(\ell) \notin \ZZ.
\]
The stronger condition also shows that
\[
  d \mid \binom{r+n}{r} \impliesq \MS_{(r,n)}(\ell) \notin \ZZ.
\]
Therefore, we infer that $\# \NI_m(\ell) \geq \# \widetilde{B}_m(d)$,
implying the result.
\end{proof}

\begin{proof}[Proof of Theorem~\ref{thm:range}]
Let $n, r \geq 2$ and $\ell \in \ZZ \setminus \set{0,1}$.
We have to show two parts.

(i). Assume that $n > r \geq \frac{1}{5} n$ and $n \geq |\ell-1|$.
First we consider the case $n \geq 25$. Using Theorem~\ref{thm:Nagura},
we infer that there exists a prime~$p$ satisfying $n < p < n+r$,
since $n+r \geq (1 + \frac{1}{5}) n$. By assumption we have
\[
  r! \, (\ell-1) \not\equiv 0 \pmod{p} \andq
    (\ell-1)^p - \ell^p \equiv -1 \pmod{p}.
\]
Hence, applying Proposition~\ref{prop:congr-2} yields that
$\MS_{(r,n)}(\ell) \notin \ZZ$ for $n \geq 25$.
Checking Table~\ref{tbl:except-1} reveals that the result also holds for
the remaining case where $1 < n < 25$.
As a refinement, Theorem~\ref{thm:Dusart} allows us to take the condition
$r > n / \!\log^3 n$ for $n \geq 89\,693$.

(ii). Assume that $r > n$. We apply Theorem~\ref{thm:Hanson} to
$\binom{r+n}{n}$, where the exceptions
\[
  \binom{r+n}{n} \in \set{\binom{4}{2},\binom{9}{2},\binom{10}{5}}
\]
are ruled out by the excluded condition $r=n$, and by
Theorem~\ref{thm:main}~(iii) that $r+n$ is a prime power.
Therefore, we can continue without restrictions. Then there exists a prime
$p > \frac{3}{2} n$ that divides exactly one of the numbers
$r+1, \ldots, r+n$, say $r+d$ with $d \in \set{1,\ldots,n}$.
This also implies that $p \nmid r$. We split the proof into two cases as follows.

\Case{$p \nmid \ell - 1$}
By Corollary~\ref{cor:basic}~(i), we have that
\begin{equation} \label{eq:S-frac}
  \MS_{(r,n)}(\ell) = \sum_{k=0}^{n} \binom{n}{k} (\ell-1)^{n-k} \frac{r}{r+k}.
\end{equation}
All summands of \eqref{eq:S-frac} are $p$-integral, except for $k = d$ where
\begin{equation} \label{eq:S-frac-p}
  \ord_p \left( \binom{n}{d} (\ell-1)^{n-d} \, r\right) = 0 \andq
    \ord_p \left( \frac{1}{r+d} \right) < 0.
\end{equation}
Thus, Lemma~\ref{lem:val-min} implies that $\ord_p(\MS_{(r,n)}(\ell)) < 0$
and finally $\MS_{(r,n)}(\ell) \notin \ZZ$.

\Case{$p \mid \ell - 1$}
We infer from \eqref{eq:S-frac} and \eqref{eq:S-frac-p} that if
\[
  (n-d) \ord_p (\ell-1) < \ord_p (r+d)
\]
(being always true for $d=n$), then $\MS_{(r,n)}(\ell) \notin \ZZ$.
Conversely, if and only if
\[
  d \neq n \andq (n-d) \ord_p (\ell-1) \geq \ord_p (r+d),
\]
then $\MS_{(r,n)}(\ell) \in \ZZ_p$.

To prevent the latter case $\MS_{(r,n)}(\ell) \in \ZZ_p$, which can happen
for sufficiently large~$\ell$, we have to require that $p \nmid \ell - 1$.
A priori, this is satisfied if $p > |\ell-1|$, which is handled by the
condition $n \geq \frac{2}{3} |\ell-1|$. Furthermore, the condition
$\gcd \mleft( \binom{r+n}{r}, \ell-1 \mright)=1$ also ensures that
$p \nmid \ell - 1$, but it may exclude the allowed cases in which a prime
$q < p$ satisfies $q \mid \ell - 1$ and $q \mid \binom{r+n}{r}$.
Thus, an improved condition, involving such primes, defines the set
\[
  \MP = \set{p : p > \tfrac{3}{2} n,\, p \mid \binom{r+n}{r},\, p \nmid \ell-1},
\]
which has to be nonempty. This completes the proof of the theorem.
\end{proof}

\begin{proof}[Proof of Corollary~\ref{cor:range}]
Theorem~\ref{thm:main}~(i) and (ii) cover the cases $r=1$, $n=1$, and $r=n$.
Let $n,r \geq 2$, $r \neq n$, and $\ell \in \LS$.
We consider the two parts of the proof of Theorem~\ref{thm:range}.
In both cases there exists a prime $p \geq 5 > |\ell - 1|$,
from which the result then follows.
(i). We have $p > n > r \geq 2$.
(ii). We have $p > \frac{3}{2} n \geq 3$.
\end{proof}

%%%%%%%%%%%%%%%%%%%%%%%%%%%%%%%%%%%%%%%%%%%%%%%%%%%%%%%%%%%%%%%%%%%%%%%%%%%%%%%%
% Section
%%%%%%%%%%%%%%%%%%%%%%%%%%%%%%%%%%%%%%%%%%%%%%%%%%%%%%%%%%%%%%%%%%%%%%%%%%%%%%%%

\section{Exceptions}
\label{sec:except}

Let $n, r \geq 1$ and $\ell \in \ZZ$. The necessary and sufficient condition
for exceptional cases, where $\MS_{(r,n)}(\ell) \in \ZZ$, can be reformulated
by Proposition~\ref{prop:congr-1} as a congruence of an incomplete binomial sum
such that
\begin{equation} \label{eq:S-congr-except}
  \sum_{k=0}^{n-1} \binom{r+n}{k} (-\ell)^k \equiv 0 \pmod{\binom{r+n}{r}}.
\end{equation}

\begin{theorem}
Let $n, r \geq 1$ and $\ell \in \ZZ$. If $\MS_{(r,n)}(\ell) \in \ZZ$,
then there exist positive integers $a$ and $b$ such that
\[
  \MS_{(r,n)}(a + b \lambda) \in \ZZ \quad (\lambda \in \ZZ),
\]
where $b = \binom{r+n}{r}$ and $1 < a < b$ with $a \equiv \ell \pmod{b}$.
As a consequence,
\[
  \MS_{(r,n)}(\ell) \notin \ZZ \quad (1 < \ell < b) \impliesq
  \MS_{(r,n)}(\ell) \notin \ZZ \quad (\ell \in \ZZ).
\]
\end{theorem}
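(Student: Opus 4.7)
The plan is to reduce everything to Proposition~\ref{prop:congr-1}, which asserts that $\MS_{(r,n)}(\ell) \in \ZZ$ if and only if
\[
  \sum_{k=0}^{n-1} \binom{r+n}{k}(-\ell)^k \equiv 0 \pmod{b}, \qquad b = \binom{r+n}{r}.
\]
The right-hand side is a polynomial in $\ell$ reduced modulo $b$, so its truth depends only on the residue class of $\ell$ modulo $b$. The very first step, therefore, is simply to observe that $\ell' \equiv \ell \pmod{b}$ implies $\MS_{(r,n)}(\ell) \in \ZZ \iff \MS_{(r,n)}(\ell') \in \ZZ$. Starting from any integrality witness $\ell$, this produces the entire arithmetic progression $a + b\lambda$ (with $a \equiv \ell \pmod{b}$) of further integrality points, which is the main content of the displayed conclusion.

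Next, I would choose the canonical representative $a \in \{1, 2, \ldots, b\}$ of $\ell$ modulo $b$ and rule out the two boundary values. If $a = b$, then $\ell \equiv 0 \pmod{b}$, so the assumption would give $\MS_{(r,n)}(0) \in \ZZ$; but by Corollary~\ref{cor:basic}~(ii), $\MS_{(r,n)}(0) = (-1)^n \binom{r+n}{r}^{-1}$, and since $\binom{r+n}{r} \geq r+1 \geq 2$, this is a unit fraction and non-integral, a contradiction. If $a = 1$, then by Corollary~\ref{cor:basic}~(iii), $\MS_{(r,n)}(1) = r/(r+n)$, which lies strictly between $0$ and $1$ because $n \geq 1$, and so is again non-integral, contradicting the assumption. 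Thus $1 < a < b$.

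The consequence at the end follows by contraposition. Suppose $\MS_{(r,n)}(\ell) \notin \ZZ$ for every $\ell$ with $1 < \ell < b$, and assume for contradiction that $\MS_{(r,n)}(\ell_0) \in \ZZ$ for some $\ell_0 \in \ZZ$. Applying the first part of the theorem to $\ell_0$ yields an integer $a$ with $1 < a < b$ and $\MS_{(r,n)}(a + b\lambda) \in \ZZ$ for all $\lambda \in \ZZ$; taking $\lambda = 0$ contradicts the hypothesis on the interior range $\{2, \ldots, b-1\}$.

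There is no real obstacle: the whole proof is a one-line observation about Proposition~\ref{prop:congr-1} combined with the two elementary evaluations $\MS_{(r,n)}(0)$ and $\MS_{(r,n)}(1)$ from Corollary~\ref{cor:basic}. The only care required is bookkeeping of representatives modulo $b$, so that both endpoints $a = 1$ and $a = b$ are explicitly excluded and the strict inequalities $1 < a < b$ claimed in the statement are actually obtained.
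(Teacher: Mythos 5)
Your proposal is correct and follows essentially the same route as the paper: both rest on the observation that the integrality criterion of Proposition~\ref{prop:congr-1} depends only on $\ell$ modulo $b=\binom{r+n}{r}$, and both exclude the residues $0$ and $1$ using the explicit non-integral values $\MS_{(r,n)}(0)=(-1)^n\binom{r+n}{r}^{-1}$ and $\MS_{(r,n)}(1)=r/(r+n)$ (the paper cites Theorem~\ref{thm:main}~(v), which is itself proved from exactly those evaluations in Corollary~\ref{cor:basic}). The only cosmetic difference is your choice of representative $a\in\{1,\dots,b\}$ versus the paper's $0\le a<b$.
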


\begin{proof}
Assume that $\MS_{(r,n)}(\ell) \in \ZZ$. Let $b = \binom{r+n}{r}$.
By Proposition~\ref{prop:congr-1}, congruence \eqref{eq:S-congr-except} holds
for $\ell$, so also for the values
\begin{equation} \label{eq:S-congr-ell}
  \ell = a + b \lambda \quad (\lambda \in \ZZ)
\end{equation}
with some integer $a \equiv \ell \pmod{b}$, where $0 \leq a < b$.
By Theorem~\ref{thm:main}~(v), the case $a \in \set{0,1}$ cannot occur,
so we have that $1 < a < b$.
Conversely, if $\MS_{(r,n)}(\ell) \notin \ZZ$ for $1 < \ell < b$,
then from \eqref{eq:S-congr-except} and \eqref{eq:S-congr-ell},
it follows that $\MS_{(r,n)}(\ell) \notin \ZZ$ for all $\ell \in \ZZ$.
\end{proof}

Extending the computations of Table~\ref{tbl:except-1} for the case $r = 2$
shows that different values of $a$ can occur for a given modulus~$b$.
\vspace*{8pt}

\begin{table}[H]
\setstretch{1.25}
\begin{center}
\begin{tabular}{*{4}{c}}
  \toprule
  \multicolumn{4}{c}{Parameters $(r,n,a,b)$} \\
  \midrule
  $(2,4,11,15)$ & $(2,8,28,45)$ & $(2,12,53,91)$ & $(2,16,35,153)$ \\
  $(2,16,86,153)$ & $(2,16,137,153)$ & $(2,20,127,231)$ & $(2,20,160,231)$ \\
  $(2,20,226,231)$ & $(2,24,176,325)$ & $(2,28,233,435)$ & $(2,32,298,561)$ \\
  \bottomrule
\end{tabular}

\caption{\parbox[t]{23em}{Exceptions where $\MS_{(r,n)}(a) \in \ZZ$ for $r=2$,
$1 \leq n \leq 32$, and $1 < a < b = \binom{r+n}{r}$.}}
\end{center}
\end{table}
\vspace*{-2pt}

The case $r=3$ shows a different and more complex pattern.

\begin{result}
Let ${1 \leq n < 200}$. The exceptions $\MS_{(3,n)}(\ell) \in \ZZ$,
for some suitable $\ell \in \ZZ$, occur for
\begin{align*}
  n \in \{& 7,18,23,31,36,39,54,55,71,87,90,95,103,108,119, \\
    & 126,127,135,144,151,159,167,180,183,198,199 \}.
\end{align*}
See Figure~\ref{fig:except-1}.
Any element $n$ of the above sequence has the property that $3+n$ has
at least two different prime factors by Theorem~\ref{thm:main}~(iii).
Checking the exceptions $\MS_{(3,18)}(\ell) \in \ZZ$ for
$1 < \ell < \binom{21}{3} = 1330$ provides the values
\[
  \ell \in \set{153,191,419,457,723,951,989,1217},
\]
as displayed in Figure~\ref{fig:except-2}.
\end{result}

\vspace*{5ex}

\begin{figure}[H]
\begin{center}
\includegraphics[width=9cm]{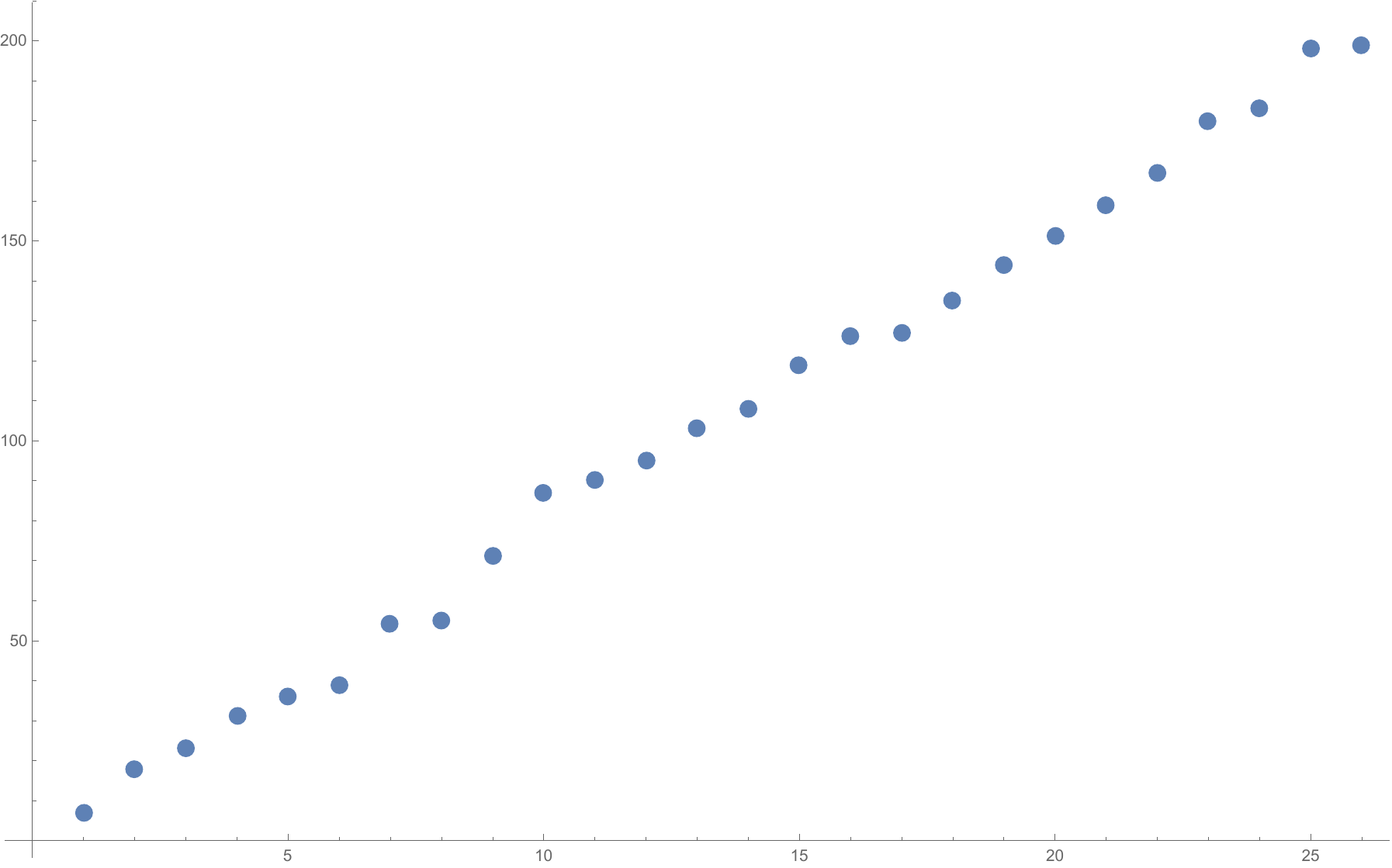}

\caption{\parbox[t]{27.1em}{Exceptions where $\MS_{(3,n)}(\ell) \in \ZZ$ for
$1 \leq n < 200$ and suitable $\ell$. Displayed values of $n$.}}
\label{fig:except-1}
\end{center}
\end{figure}

\begin{figure}[H]
\begin{center}
\includegraphics[width=9cm]{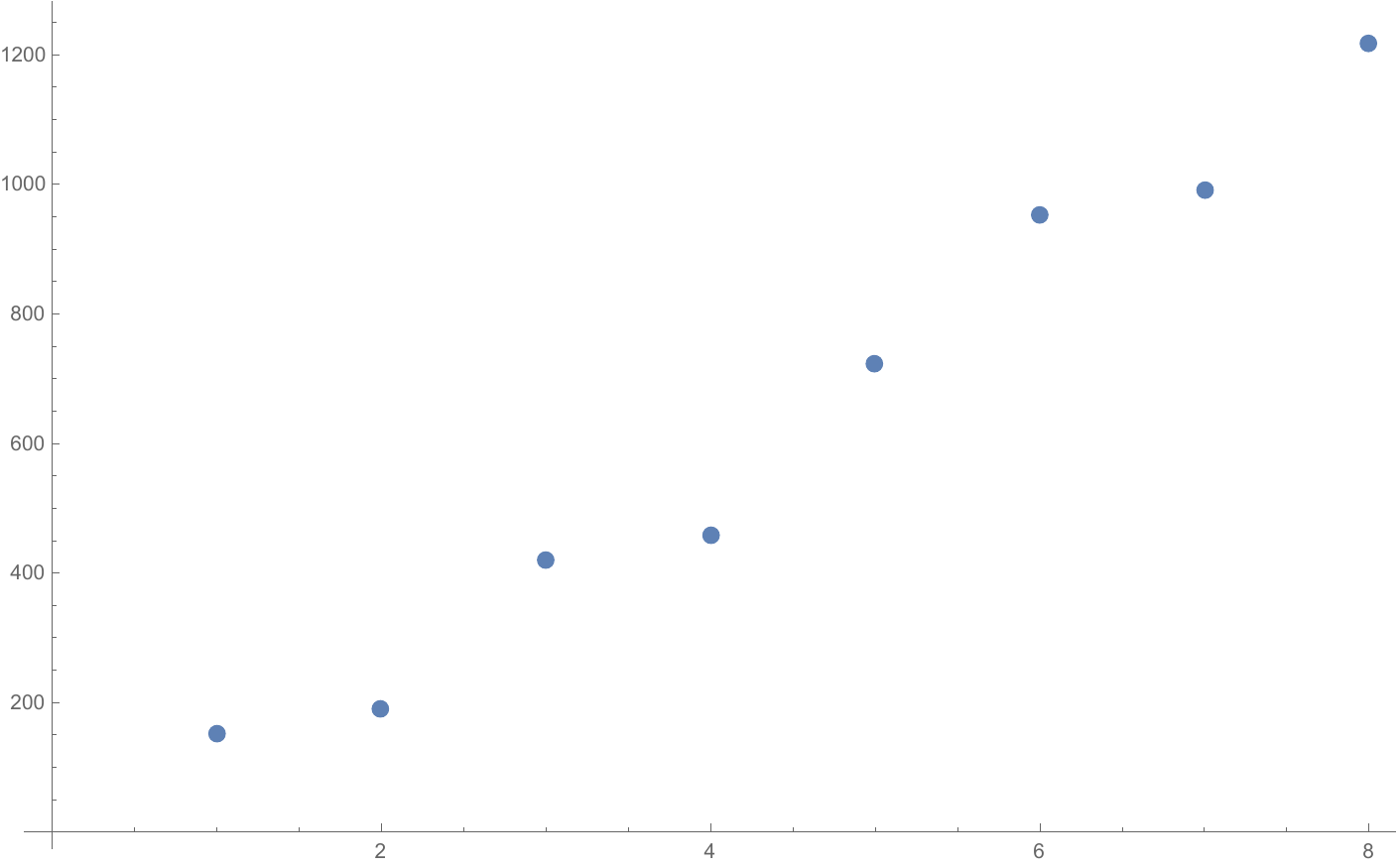}

\caption{\parbox[t]{21.9em}{Exceptions where $\MS_{(3,18)}(\ell) \in \ZZ$ for
$1 < \ell < 1330$. Displayed values of $\ell$.}}
\label{fig:except-2}
\end{center}
\end{figure}

%%%%%%%%%%%%%%%%%%%%%%%%%%%%%%%%%%%%%%%%%%%%%%%%%%%%%%%%%%%%%%%%%%%%%%%%%%%%%%%%
% Bibliography
%%%%%%%%%%%%%%%%%%%%%%%%%%%%%%%%%%%%%%%%%%%%%%%%%%%%%%%%%%%%%%%%%%%%%%%%%%%%%%%%

\bibliographystyle{amsplain}

\end{document}